\newtheorem{dingyi}{Definition}[section]
\newtheorem{yinli}[dingyi]{Lemma}
\newtheorem{dingli}[dingyi]{Theorem}
\newtheorem{lizi}{Example}
\newtheorem{caixiang}[]{Conjecture}
\newcommand{\spa}{\mathrm{span}}
\newcommand{\bol}{Bollob\'as}
\newcommand{\lov}{Lov\'asz}
\newcommand{\fu}{F\"{u}redi}
\newcommand{\ekr}{Erd\"{o}s-Ko-Rado}
\newcommand{\heg}{Heged\"{u}s}
\newcommand{\ELTE}{E\"{o}tv\"{o}s Lor\'{a}nd University}
\newcommand{\ELTEAdress}{P\'{a}zm\'{a}ny P\'{e}ter s\'{e}t\'{a}ny 1/C, Budapest, Hungary, H-1117}
\title{On~\bol-type theorems of $d$-tuples}
\author[$\ast\sympawn$]{Erfei Yue}
\affil[$\sympawn$]{\footnotesize Institute of Mathematics, \ELTE,

\ELTEAdress,

\Letter\ yef9262@mail.bnu.edu.cn}
\date{}
\begin{document}
\maketitle

\begin{center}
\textbf{Abstract}
\end{center}
In 1965,~\bol~\cite{Sets} proved that for a~\bol~set-pair system~$\{(A_i,B_i)\mid i\in[m]\}$, the maximum value of~$\sum_{i=1}^m\binom{|A_i|+|B_i|}{A_i}^{-1}$ is~$1$.
~\heg~and Frankl~\cite{Hegedus} recently extended the concept of~\bol~systems to~$d$-tuples,
conjecturing that for a~\bol~system of~$d$-tuples,~$\{(A_i^{(1)},\ldots,A_i^{(d)})\mid i\in[m]\}$,
the maximum value of~$\sum_{i=1}^m\binom{|A_i^{(1)}|+\cdots+|A_i^{(d)}|}{|A_i^{(1)}|,\ldots,|A_i^{(d)}|}^{-1}$ is also~$1$.
This paper refutes this conjecture and establishes an upper bound for the sum.
In the case~$d=3$, the derived upper bound is asymptotically tight. Furthermore, we sharpen an inequality for skew~\bol~systems of~$d$-tuples in~\cite{Hegedus},
Finally, we determine the maximum size of a uniform skew~\bol~system of~$d$-tuples on both sets and spaces.

\section{Introduction}

Let~$[n]=\{1,\ldots,n\}$ be a set of size~$n$.
The following concepts of~\bol~systems and skew~\bol~systems were introduced by~\bol~\cite{Sets} and Frankl~\cite{Skew} respectively.

\begin{dingyi}
Suppose~$\mathcal{P}=\{(A_i,B_i)\mid i\in[m]\}$ is a family of pairs of sets, where~$A_i,B_i\subseteq [n]$, and~$A_i\cap B_i=\emptyset$.
Then~$\mathcal{P}$ is called a~\emph{\bol~system} if~$A_i\cap B_j\neq\emptyset,\forall i\neq j$,
and a \emph{skew~\bol~system} if~$A_i\cap B_j\neq\emptyset,\forall i<j$.
\end{dingyi}

To solve a problem on hypergraphs,~\bol~\cite{Sets} proved the following theorem in 1965.
In 1975, Tarj\'an~\cite{Tarjan} independently provided a more elegant proof.

\begin{dingli}[\bol~\cite{Sets}]\label{Th:nonuniform}
Let~$\mathcal{P}=\{(A_i,B_i)\mid i\in[m]\}$ be a~\bol~system, where~$A_i,B_i\subseteq [n]$. Then
\begin{equation*}
\sum_{i=1}^m \binom{|A_i|+|B_i|}{|A_i|}^{-1}\leqslant 1.
\end{equation*}
\end{dingli}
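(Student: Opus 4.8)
The plan is to give the elegant probabilistic (random-permutation) argument, which is the approach attributed to Tarj\'an in the excerpt. Write $a_i=|A_i|$ and $b_i=|B_i|$, and let $\pi$ be a uniformly random total order of the ground set $[n]$. For each index $i\in[m]$, introduce the event
$$
E_i=\{\text{every element of }A_i\text{ precedes every element of }B_i\text{ under }\pi\}.
$$
Since $A_i\cap B_i=\emptyset$, the set $A_i\cup B_i$ consists of $a_i+b_i$ distinct elements, whose relative order under $\pi$ is uniformly distributed over all $(a_i+b_i)!$ orderings; the event $E_i$ occurs precisely when the $a_i$ elements of $A_i$ occupy the first $a_i$ positions among these, so
$$
\Pr[E_i]=\frac{a_i!\,b_i!}{(a_i+b_i)!}=\binom{a_i+b_i}{a_i}^{-1}.
$$
This realizes the term $\binom{|A_i|+|B_i|}{|A_i|}^{-1}$ as a probability, which is the whole point of the encoding.

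The heart of the argument is to show that the events $E_1,\ldots,E_m$ are pairwise disjoint. Suppose toward a contradiction that $E_i$ and $E_j$ both occur for some $i\neq j$. By the~\bol~condition we may pick $x\in A_i\cap B_j$ and $y\in A_j\cap B_i$. From $E_i$ (all of $A_i$ before all of $B_i$) together with $x\in A_i$ and $y\in B_i$, we conclude that $x$ precedes $y$ under $\pi$; from $E_j$ together with $y\in A_j$ and $x\in B_j$, we conclude that $y$ precedes $x$. This is a contradiction, so at most one $E_i$ can hold for any realization of $\pi$. Consequently, since the events are disjoint,
$$
\sum_{i=1}^m\binom{|A_i|+|B_i|}{|A_i|}^{-1}=\sum_{i=1}^m\Pr[E_i]=\Pr\!\left[\bigcup_{i=1}^m E_i\right]\leqslant 1,
$$
which is exactly the claimed inequality.

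I expect the main (indeed only) obstacle to be the verification of pairwise disjointness, and the essential feature making it work is that the~\bol~condition supplies crossing intersections in \emph{both} directions, namely $A_i\cap B_j\neq\emptyset$ and $A_j\cap B_i\neq\emptyset$ for every $i\neq j$. The probability computation itself is routine, relying only on the disjointness $A_i\cap B_i=\emptyset$ to guarantee $a_i+b_i$ distinct elements. It is worth noting in advance that this two-sided use of the intersection condition is precisely what fails for a \emph{skew}~\bol~system (where only $A_i\cap B_j\neq\emptyset$ for $i<j$ is assumed), so the same permutation argument would need to be supplemented or replaced there.
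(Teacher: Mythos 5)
Your proof is correct, and it is exactly the random-permutation argument that the paper itself generalizes in Section 2 to prove Theorems~\ref{Th:d-nonuniform} and~\ref{Th:d-skew-nonuniform} (the paper only cites Theorem~\ref{Th:nonuniform} without reproving it, but its own probabilistic proofs for $d$-tuples follow the same encoding of the reciprocal binomial coefficient as $\Pr[E_i]$ and the same pairwise-disjointness argument via the two-sided intersection condition). Your closing remark about why the argument breaks for skew systems also matches the paper's discussion preceding Theorem~\ref{Th:skew-nonuniform}.
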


Theorem~\ref{Th:nonuniform} also gives the maximum size of a uniform~\bol~system. Specifically, if~$|A_i|=a,|B_i|=b$ for every~$i\in[m]$,
Theorem~\ref{Th:nonuniform} implies that~$m\leqslant\binom{a+b}{a}$.
This upper bound was later proved by Jaeger and Payan~\cite{FRA}, and Katona~\cite{Katona} independently with different methods.
In 1982, using polynomial method, Frankl~\cite{Skew} proved that this upper bound remains true for skew~\bol~systems.
This generalization has application in automata theory~\cite{Pin}.

\begin{dingli}[Frankl~\cite{Skew}]\label{Th:skew-uniform}
Let~$\mathcal{P}=\{(A_i,B_i)\mid i\in[m]\}$ be a skew~\bol~system, where~$A_i,B_i\subseteq [n]$, and~$|A_i|=a,|B_i|=b$ for any~$i$.
Then~$m\leqslant\binom{a+b}{a}$.
\end{dingli}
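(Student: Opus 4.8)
The plan is to prove Theorem~\ref{Th:skew-uniform} by the polynomial (exterior algebra / linear algebra) method, which is the standard tool for skew \bol~systems since the skew hypothesis breaks the symmetry needed for the elegant probabilistic arguments (random permutation, or Tarj\'an's argument) that handle the non-skew case. The key idea is to associate to each set $A_i$ a suitable polynomial (or multilinear form / vector in an exterior power) so that the skew intersection condition $A_i\cap B_j\neq\emptyset$ for $i<j$ forces a \emph{triangular} evaluation pattern, whence the associated vectors are linearly independent, and the bound $m\leqslant\binom{a+b}{a}$ drops out as a dimension count.

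Concretely, I would work over a field (say $\mathbb{Q}$ or a large finite field) and regard each element $k\in[n]$ as a generic vector $x_k\in V$ for a vector space $V$ of dimension $a+b$. To each $i$ I associate the wedge product $v_i=\bigwedge_{k\in A_i}x_k\in\Lambda^a V$, an element of a space of dimension $\binom{a+b}{a}$. The crucial bilinear pairing is $\Lambda^a V\times\Lambda^b V\to\Lambda^{a+b}V\cong\mathbb{Q}$; pairing $v_i$ with $w_j=\bigwedge_{k\in B_j}x_k$ gives $v_i\wedge w_j$, which vanishes precisely when $A_i$ and $B_j$ share a common element (so that a repeated generic vector appears in the wedge). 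The remaining point is that $v_i\wedge w_i\neq 0$: since $A_i\cap B_i=\emptyset$ and $|A_i|+|B_i|=a+b$, the $a+b$ generic vectors $\{x_k\mid k\in A_i\cup B_i\}$ are linearly independent, so their full wedge is nonzero.

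First I would set up the genericity: choose the $x_k$ so that every $(a+b)$-subset of them is linearly independent (possible by taking points on a moment curve, or by a Zariski-density / Schwartz--Zippel argument over a large field). Then the skew condition gives the triangular system
\begin{equation*}
v_i\wedge w_j=0\quad(i<j),\qquad v_i\wedge w_i\neq 0.
\end{equation*}
From here the standard linear-independence lemma applies: suppose $\sum_i c_i v_i=0$; wedging with $w_j$ and using that all terms with $i<j$ vanish leaves only the $i\geqslant j$ contributions, and taking $j$ to be the \emph{smallest} index with $c_j\neq 0$ isolates $c_j(v_j\wedge w_j)=0$, forcing $c_j=0$, a contradiction. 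Hence the $v_i$ are linearly independent in $\Lambda^a V$, so $m\leqslant\dim\Lambda^a V=\binom{a+b}{a}$.

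The main obstacle, and the place where care is genuinely needed, is establishing that the $v_i$ are linearly independent \emph{despite} the asymmetry: the triangular elimination only works if we order correctly and exploit $v_i\wedge w_j=0$ for $i<j$ (not for all $i\neq j$), and it is essential that we test against $w_j$ for increasing $j$ so that the cancellation is one-sided. A secondary technical point is guaranteeing the genericity of the $x_k$ uniformly---one must verify that a single choice of points makes all the relevant $(a+b)\times(a+b)$ determinants nonzero simultaneously, which follows because the defining polynomial (a product of finitely many nonzero determinants) is not identically zero and so has a nonvanishing evaluation over any sufficiently large field. I would streamline this by simply invoking a moment-curve / Vandermonde construction, where the nonvanishing of each maximal minor is a classical Vandermonde computation.
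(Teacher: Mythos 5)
Your overall strategy is the standard exterior-algebra proof of Frankl's theorem, and it is essentially the same machinery the paper deploys in Section~3 for the $d$-tuple generalization (note the paper does not reprove Theorem~\ref{Th:skew-uniform} itself; it quotes it from Frankl). The setup is fine: generic vectors $x_k$ with every $(a+b)$-subset linearly independent, $v_i\wedge w_j=0$ whenever $A_i\cap B_j\neq\emptyset$ (hence for all $i<j$), and $v_i\wedge w_i\neq 0$ by genericity and disjointness.

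The one step that fails as written is the triangular elimination --- precisely the step you flagged as the place needing care. Suppose $\sum_i c_iv_i=0$ and wedge with $w_j$. The skew condition kills the terms with $i<j$, so what survives is $\sum_{i\geqslant j}c_i(v_i\wedge w_j)=0$. Taking $j$ to be the \emph{smallest} index with $c_j\neq 0$ does not isolate $c_j(v_j\wedge w_j)$: the terms $c_i(v_i\wedge w_j)$ with $i>j$ are uncontrolled, because the skew hypothesis says nothing about $A_i\cap B_j$ for $i>j$. You must instead take $j$ to be the \emph{largest} index with $c_j\neq 0$; then the $i<j$ terms vanish by skewness and the $i>j$ terms vanish because $c_i=0$, leaving $c_j(v_j\wedge w_j)=0$, a contradiction. (Equivalently, the matrix $\left(v_i\wedge w_j\right)_{i,j}$ is triangular with nonzero diagonal, but you eliminated from the wrong corner; if you insist on the smallest index, prove instead that the $w_j$ are independent by pairing against $v_i$ with $i$ minimal.) This is consistent with the paper's own independence argument in the proof of Theorem~\ref{Th:d-space}, which evaluates at the test vector indexed by the \emph{maximum} index with nonzero coefficient. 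With that one-word repair your proof is correct.
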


While Theorem~\ref{Th:nonuniform} no longer hold if we replace the condition~\bol~system by skew~\bol~system.
The following (tight) inequalities for skew~\bol~systems (instead of~\bol~systems) are proved by~\heg~and Frankl~\cite{Hegedus}, and Yue~\cite{Yue}.

\begin{dingli}[Heged\"{u}s and Frankl~\cite{Hegedus}, Yue~\cite{Yue}]\label{Th:skew-nonuniform}
Suppose~$A_i,B_i\subseteq [n],i\in [m]$, and~$\mathcal{P}=\{(A_i,B_i)\mid i\in[m]\}$ is a skew~\bol~system. Then we have
\begin{equation*}
\sum_{i=1}^m \left((1+|A_i|+|B_i|)\binom{|A_i|+|B_i|}{|A_i|}\right)^{-1}\leqslant 1,
\end{equation*}
and so
\begin{equation*}
\sum_{i=1}^m \binom{|A_i|+|B_i|}{|A_i|}^{-1}\leqslant 1+n.
\end{equation*}
\end{dingli}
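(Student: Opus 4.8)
The plan is to establish the first, stronger inequality by a permutation (probabilistic) argument in the spirit of Tarj\'an's proof of Theorem~\ref{Th:nonuniform}, and then to deduce the second inequality as an immediate corollary. The crux is to find a probabilistic model whose elementary events carry probability exactly $\left((1+|A_i|+|B_i|)\binom{|A_i|+|B_i|}{|A_i|}\right)^{-1}$. To this end I would adjoin a single new symbol $\ast\notin[n]$ to the ground set and take a uniformly random linear order of the $(n+1)$-element set $[n]\cup\{\ast\}$.

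For each $i$ define the event $F_i$ that, in this random order, every element of $A_i$ precedes $\ast$ and $\ast$ precedes every element of $B_i$. Since $F_i$ depends only on the relative order of the $|A_i|+|B_i|+1$ elements of $A_i\cup B_i\cup\{\ast\}$ (these are distinct because $A_i\cap B_i=\emptyset$ and $\ast\notin[n]$), a direct count of the favourable orderings gives
\[
\Pr[F_i]=\frac{|A_i|!\,|B_i|!}{(|A_i|+|B_i|+1)!}=\left((1+|A_i|+|B_i|)\binom{|A_i|+|B_i|}{|A_i|}\right)^{-1},
\]
using the identity $(1+a+b)\binom{a+b}{a}=\tfrac{(a+b+1)!}{a!\,b!}$.

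The key step is to show that the events $F_i$ are pairwise disjoint, and here the skew hypothesis is exactly what is needed. Given $i\neq j$, order them so that (say) $i<j$; the skew condition supplies an element $x\in A_i\cap B_j$. If both $F_i$ and $F_j$ occurred, then $x\in A_i$ would precede $\ast$ (by $F_i$) while $x\in B_j$ would follow $\ast$ (by $F_j$), a contradiction. Hence at most one $F_i$ holds on any outcome, so $\sum_i\Pr[F_i]=\Pr[\bigcup_i F_i]\le 1$, which is precisely the first inequality. Finally, since $A_i$ and $B_i$ are disjoint subsets of $[n]$ we have $|A_i|+|B_i|\le n$, hence $1+|A_i|+|B_i|\le 1+n$; multiplying each summand of the first inequality by $1+n$ then yields $\sum_i\binom{|A_i|+|B_i|}{|A_i|}^{-1}\le 1+n$.

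I expect the only genuine obstacle to be the modelling step: recognising that the weight's extra factor $1+|A_i|+|B_i|$ is produced by inserting exactly one auxiliary symbol, and verifying that this single insertion makes the \emph{one-directional} (skew) intersection condition sufficient to force disjointness of the $F_i$ (the classical Bollob\'as argument needs both $A_i\cap B_j\neq\emptyset$ and $A_j\cap B_i\neq\emptyset$, whereas the marker $\ast$ breaks the symmetry and lets one direction suffice). Once the right random order is fixed, the probability computation and the disjointness check are routine.
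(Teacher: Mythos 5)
Your proof is correct, and it is essentially the same argument the paper uses: the paper does not reprove this cited theorem directly, but its proof of Theorem~\ref{Th:d-skew-nonuniform} is exactly your construction generalized to $d$-tuples, using a uniformly random permutation of $[n]$ together with $d-1$ adjoined delimiter symbols (your single marker $\ast$ is the $d=2$ case), the same probability count, and the same pairwise-disjointness argument via the skew condition. The deduction of the second inequality from $1+|A_i|+|B_i|\leqslant 1+n$ is also the standard step the paper indicates.
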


Note that there are two kinds of structures:~\bol~systems and skew~\bol~systems; and for each of them, there are two questions to ask:
what is the maximum size of the family in the uniform case, and what inequality can we get in the nonuniform case.
The following table is a brief summary so far.

\begin{table}[h]
\centering
\begin{tabular}{|l|c|c|}
\hline
 & \bol~systems & Skew~\bol~systems \\
\hline
Uniform cases & \bol~\cite{Sets}& \heg~and Frankl~\cite{Hegedus} and Yue~\cite{Yue} \\
\hline
Nonuniform cases & \bol~\cite{Sets} & Frankl~\cite{Skew} \\
\hline
\end{tabular}
\caption{A brief map for~\bol-type theorem}
\end{table}

A natural generalization of (skew)~\bol~systems involves considering families of~$d$-tuples instead of set-pairs.
In~\cite{Hegedus},~\heg~and Frankl defined~\bol~systems and skew~\bol~systems for~$d$-tuples as follows.
Note that there are some other ways to define these concepts on~$d$-tuples, see~\cite{kTuples} for instance.

\begin{dingyi}
Let~$\mathcal{F}=\{(A_i^{(1)},\ldots,A_i^{(d)})\mid i\in[m]\}$ be a collection of~$d$-tuples. Suppose that~$A_i^{(p)}\cap A_i^{(q)}=\emptyset$
for any~$i\in[m]$ and~$p\neq q$. Then~$\mathcal{F}$ is a~\emph{\bol~system (skew~\bol~system)} if for any~$i\neq j$ \emph{(}$i<j$\emph{)},
there exist~$p<q$ such that~$A_i^{(p)}\cap A_j^{(q)}\neq\emptyset$.
\end{dingyi}

For a~$d$-tuple~$(A^{(1)},\ldots,A^{(d)})$, we say~$(|A^{(1)}|,\ldots,|A^{(d)}|)$ is its \emph{type}. Following is an example of~\bol~system of~$d$-tuples.

\begin{lizi}\label{Ex:sym}
For fixed~$a_1,\ldots,a_d$, let~$\mathcal{F}$ be the collection of all disjoint~$d$-tuples~$(A^{(1)},\ldots,A^{(d)})$ of type~$(a_1,\ldots,a_d)$
such that~$A^{(k)}\in [a_1+\cdots+a_d],\forall k\in[d]$. For distinct~$(A^{(1)},\ldots,A^{(d)})$ and~$(B^{(1)},\ldots,B^{(d)})$,
there is a~$k$ such that~$A^{(k)}\neq B^{(k)}$.
Then we have~$A^{(k)}\cap\left(\cup_{l\neq k}B^{(l)}\right)\neq\emptyset$ and~$B^{(k)}\cap\left(\cup_{l\neq k}A^{(l)}\right)\neq\emptyset$.
Hence~$\mathcal{F}$ is a~\bol~system of~$d$-tuples.
\end{lizi}

For a fixed~$n\geqslant 0$, and~$k_1,\ldots,k_t\leqslant n$, define
\begin{equation*}
\binom{n}{k_1,\ldots,k_t}:=\frac{n!}{k_1!\cdots k_t!(n-k_1-\cdots-k_t)!},
\end{equation*}
which is a natural generalization of the binomial coefficients.
Then the size of the~\bol~system defined in Example~\ref{Ex:sym} is~$\binom{a_1+\cdots+a_d}{a_1,\ldots,a_d}$.

To generalize Theorem~\ref{Th:nonuniform},~\heg~and Frankl~\cite{Hegedus} proposed the following conjecture.

\begin{caixiang}[Heged\"{u}s and Frankl~\cite{Hegedus}] \label{Conj}
Suppose that~$\mathcal{F}=\{(A_i^{(1)},\ldots,A_i^{(d)})\mid i\in[m]\}$ is a~\bol~system of~$d$-tuples. Then
\begin{equation*}
\sum_{i=1}^m{\binom{|A_i^{(1)}|+\cdots+|A_i^{(d)}|}{|A_i^{(1)}|,\ldots,|A_i^{(d)}|}}^{-1}\leqslant 1.
\end{equation*}
\end{caixiang}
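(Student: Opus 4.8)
The plan is to adapt the random-permutation (Tarj\'an-style) proof of Theorem~\ref{Th:nonuniform}. Discard any ground-set element lying in none of the sets, and let $\pi$ be a uniformly random linear order of $[n]$, writing $a\prec b$ when $\pi$ places $a$ before $b$. For each $i\in[m]$ let $E_i$ be the event that
\[
a\prec b\quad\text{whenever}\quad a\in A_i^{(p)},\ b\in A_i^{(q)},\ p<q,
\]
that is, under $\pi$ every element of an earlier block of the $i$-th tuple precedes every element of a later block (elements of one block may interleave freely). The whole proof would then reduce to the single inequality $\sum_{i=1}^m\Pr(E_i)\le 1$.

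First I would evaluate $\Pr(E_i)$. Since $E_i$ depends only on the restriction of $\pi$ to $S_i:=A_i^{(1)}\cup\cdots\cup A_i^{(d)}$, and the blocks are pairwise disjoint with $|S_i|=\sum_{k=1}^{d}|A_i^{(k)}|=:N_i$, a uniform order of $[n]$ restricts to a uniform order of $S_i$. Exactly $\prod_{k=1}^{d}|A_i^{(k)}|!$ of the $N_i!$ orders of $S_i$ obey the block condition, so
\[
\Pr(E_i)=\frac{\prod_{k=1}^{d}|A_i^{(k)}|!}{N_i!}=\binom{|A_i^{(1)}|+\cdots+|A_i^{(d)}|}{|A_i^{(1)}|,\ldots,|A_i^{(d)}|}^{-1}.
\]
Hence the left-hand side of the conjecture is exactly $\sum_{i=1}^m\Pr(E_i)$, and it suffices to prove that the events $E_i$ are pairwise disjoint: then $\sum_{i=1}^m\Pr(E_i)=\Pr\bigl(\bigcup_{i=1}^m E_i\bigr)\le 1$.

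The decisive step is this disjointness. Suppose some $\pi$ lies in both $E_i$ and $E_j$, $i\neq j$. Applying the~\bol~condition to the ordered pair $(i,j)$ gives indices $p<q$ and a point $x\in A_i^{(p)}\cap A_j^{(q)}$, and applying it to $(j,i)$ gives $p'<q'$ and $y\in A_j^{(p')}\cap A_i^{(q')}$. Under $E_i$ the point $x$ sits in block $p$ and $y$ in block $q'$ of the $i$-th tuple, so $E_i$ forces $x\prec y$ as soon as $p<q'$; under $E_j$ the point $x$ sits in block $q$ and $y$ in block $p'$ of the $j$-th tuple, so $E_j$ forces $y\prec x$ as soon as $p'<q$. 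If the witnesses could always be chosen so that $p<q'$ and $p'<q$ hold at once, the two events would contradict each other and disjointness would follow.

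The hard part is precisely securing $p<q'$ and $p'<q$ together. When $d=2$ one necessarily has $p=p'=1$ and $q=q'=2$, both inequalities are automatic, and the argument closes, recovering Theorem~\ref{Th:nonuniform}. For $d\ge 3$ the~\bol~condition only guarantees the existence of \emph{some} admissible block pair, and the witnesses it supplies may have, e.g., $p=2,\ q=3,\ p'=1,\ q'=2$, where $p<q'$ fails; then $x$ and $y$ land in a common block of the $i$-th tuple under $\pi$ and their order is left free, so no contradiction arises. Thus the block indices need not nest as required, and to complete a proof one would have to either select the intersection witnesses and their block indices more cleverly --- simultaneously compatible with both tuples --- or abandon the permutation method for an algebraic generic-tensor argument in the spirit of Frankl's proof of Theorem~\ref{Th:skew-uniform}. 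I expect this disjointness step to be the true obstacle on which the argument stands or falls.
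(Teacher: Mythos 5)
Your diagnosis of the weak point is exactly right, but your closing hope --- that the disjointness of the events $E_i$ might be rescued by a cleverer choice of witnesses or by an algebraic argument --- cannot be realized, because the statement is false, and the paper's treatment of it is a \emph{refutation}, not a proof (Example~\ref{Ex:counter}). Concretely, for $d=3$ let $\mathcal{F}_l$ be the family of all pairwise disjoint triples $(A,B,C)$ with $A,B,C\subseteq[n]$ of type $(l,\,n-2l,\,l)$, and let $\mathcal{F}=\bigcup_{l=0}^{\lfloor n/2\rfloor}\mathcal{F}_l$. Each layer is a~\bol~system of the symmetric kind (Example~\ref{Ex:sym}), and triples from different layers $l<l'$ intersect as required by counting: $|A|+|B|+|C'|=n-l+l'>n$ forces $(A\cup B)\cap C'\neq\emptyset$, and symmetrically $A'\cap(B\cup C)\neq\emptyset$. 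Since each triple in $\mathcal{F}_l$ partitions $[n]$, the layer contributes exactly $\binom{n}{l,n-2l,l}\cdot\binom{n}{l,n-2l,l}^{-1}=1$, so the total sum is $\lfloor n/2\rfloor+1$. In your own language: $\sum_i\Pr(E_i)=\lfloor n/2\rfloor+1>1$, so the events provably fail to be pairwise disjoint, and the failing configuration is precisely the one you exhibited --- for cross-layer pairs one can have witnesses $x\in B\cap C'$ (so $p=2$, $q=3$) and $y\in A'\cap B$ (so $p'=1$, $q'=2$), putting $x$ and $y$ in the same block $B$ of the first tuple, where their order is unconstrained.

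Everything before that step in your proposal is sound: the computation $\Pr(E_i)=\binom{|A_i^{(1)}|+\cdots+|A_i^{(d)}|}{|A_i^{(1)}|,\ldots,|A_i^{(d)}|}^{-1}$ is correct, the reduction to pairwise disjointness is correct, and for $d=2$ your argument is a complete proof of Theorem~\ref{Th:nonuniform}. The lesson is that for $d\geqslant 3$ the failure of disjointness is a feature of the problem, not a defect of the method, and the productive move upon hitting it is to hunt for a counterexample rather than a repair. What the permutation method does salvage is the paper's Theorem~\ref{Th:d-nonuniform}: adjoining random delimiters and splitting each event according to which consecutive pair of blocks is left unseparated yields the true upper bound $\frac{n+3}{2}$ for $d=3$ (asymptotically matched by the construction above) and $\frac{1}{d-1}\binom{n+d-2}{d-2}+O(n^{d-3})$ in general, so the correct constant in the conjectured inequality grows like a polynomial in $n$ of degree $d-2$ rather than being $1$.
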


However, we can deny this conjecture with the following example.

\begin{lizi}\label{Ex:counter}
Fix an arbitrary~$n$, and~$d=3$. For~$l=0,\dots,\lfloor\frac{n}{2}\rfloor$,
let~$\mathcal{F}_l$ be the collection of all disjoint triples~$(A,B,C)$ of type~$(l,n-2l,l)$, such that~$A,B,C\subseteq [n]$,
and~$\mathcal{F}$ be the union of~$\mathcal{F}_l$'s. We claim that~$\mathcal{F}$ is a~\bol~system.
Actually, for two different families~$(A,B,C),(A',B',C')\in\mathcal{F}_l$, if~$A=A'$, then~$B\cap C'\neq\emptyset$ and~$B'\cap C\neq\emptyset$.
If~$A\neq A'$, then~$A\cap(B'\cup C')=\neq\emptyset$ and~$A'\cap(B\cup C)\neq\emptyset$.
For~$(A,B,C)\in\mathcal{F}_l$ and~$(A',B',C')\in\mathcal{F}_{l'}$ where~$l<l'$, note that~$|A|+|B|+|C'|=|A'|+|B|+|C|=n-l+l'>n$,
so~$A\cap(B'\cup C')\neq\emptyset$ and~$A'\cap(B\cup C)\neq\emptyset$. Hence~$\mathcal{F}$ is a~\bol~system. But we have
\begin{equation*}
\sum_{(A,B,C)\in\mathcal{F}}{\binom{|A|+|B|+|C|}{|A|,|B|,|C|}}^{-1}
=\sum_{l=0}^{\lfloor\frac{n}{2}\rfloor}\sum_{(A,B,C)\in\mathcal{F}_l}{\binom{|A|+|B|+|C|}{|A|,|B|,|C|}}^{-1}=\left\lfloor\frac{n}{2}\right\rfloor+1.
\end{equation*}
\end{lizi}

It is important to note that Tian and Wu~\cite{Tian} also notice the invalidity of this conjecture, as well as Theorem~\ref{Th:d-skew-nonuniform}.
For further details, please refer to the remark at the end of this paper.

Since Conjecture~\ref{Conj} is not true, a natural subsequent question arises: for a~\bol~system~$\{(A_i^{(1)},\ldots,A_i^{(d)})\mid i\in[m]\}$ of~$d$-tuples,
what is the maximum value of~$\sum_{i=1}^m\binom{|A_i^{(1)}|+\cdots+|A_i^{(d)}|}{|A_i^{(1)}|,\ldots,|A_i^{(d)}|}^{-1}$?
Generalizing the probabilistic method in~\cite{Yue}, we can prove the following inequalities for~\bol~systems of~$d$-tuples.
The inequality for~$d=3$ shows the construction in Example~\ref{Ex:counter} is nearly extremum, and is asymptotically tight as~$n\rightarrow\infty$.
For the case~$d>3$, it is still not clear if the inequality is tight or not.

\begin{dingli} \label{Th:d-nonuniform}
Suppose that~$\mathcal{F}=\{(A_i^{(1)},\ldots,A_i^{(d)}\mid i\in[m])\}$ is a~\bol~system of~$d$-tuples,
and~$A_i^{(p)}\subseteq[n]$ for all~$i\in[m], p\in[d]$. Then for the case~$d=3$ we have
\begin{equation*}
\sum_{i=1}^m{\binom{|A_i^{(1)}|+|A_i^{(2)}|+|A_i^{(3)}|}{|A_i^{(1)}|,|A_i^{(2)}|,|A_i^{(3)}|}}^{-1}\leqslant\frac{n+3}{2},
\end{equation*}
and for arbitrary~$d$ we have
\begin{equation*}
\sum_{i=1}^m{\binom{|A_i^{(1)}|+\cdots+|A_i^{(d)}|}{|A_i^{(1)}|,\ldots,|A_i^{(d)}|}}^{-1}\leqslant \frac{1}{d-1}\binom{n+d-2}{d-2}+O(n^{d-3})
\end{equation*}
as~$n\rightarrow\infty$.
\end{dingli}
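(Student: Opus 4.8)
The plan is to adapt the random-ordering argument behind Yue's proof. Let $\pi$ be a uniformly random linear order of $[n]$, and for each tuple let $E_i$ be the event that $\pi$ places every element of $A_i^{(1)}$ before every element of $A_i^{(2)}$, and so on down to $A_i^{(d)}$. Since $E_i$ depends only on the relative order of the $N_i:=|A_i^{(1)}|+\cdots+|A_i^{(d)}|$ support elements, $\Pr[E_i]=\binom{N_i}{|A_i^{(1)}|,\ldots,|A_i^{(d)}|}^{-1}$, so by linearity of expectation the sum we must bound equals $\mathbb{E}_\pi\bigl[\,\#\{i:E_i\text{ occurs}\}\,\bigr]$. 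It therefore suffices to bound, for every fixed order, the number of \emph{fitting} tuples, those whose blocks appear in the prescribed left-to-right order.

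Fix the order and record each fitting tuple $T$ by its block-index function $\rho_T$, the non-decreasing function sending a support element to the index of the block containing it. The \bol~axiom then has a transparent meaning: for fitting $T\neq T'$ the condition ``some $p<q$ with $A_T^{(p)}\cap A_{T'}^{(q)}\neq\emptyset$'' says exactly that some common element $x$ satisfies $\rho_T(x)<\rho_{T'}(x)$, and applying the axiom in the other order gives some $y$ with $\rho_{T'}(y)<\rho_T(y)$. Thus any two fitting tuples \emph{cross}: neither block-index function dominates the other on the common support.

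For $d=3$ this crossing is rigid. Because both functions are monotone, a short case analysis forces one witness into the middle block of one tuple and into an outer block of the other, yielding $x\in A_T\cap B_{T'}$ and $y\in C_T\cap B_{T'}$ with $x<y$ (or the roles of $T,T'$ reversed). Hence the position spans $[\min B_T,\max B_T]$ of the middle blocks are strictly nested for every pair, so they form a chain; writing the nested endpoints as $\mathrm{lo}_1<\cdots<\mathrm{lo}_N\le\mathrm{hi}_N<\cdots<\mathrm{hi}_1$ exhibits at most $2N-1$ distinct positions of $[n]$, so there are at most $(n+1)/2$ fitting tuples with non-empty middle block. The same dichotomy shows any tuple with empty middle block forces every other middle block to straddle its gap, so at most one such tuple exists; adding it gives the claimed bound $(n+1)/2+1=(n+3)/2$.

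For general $d$ the crossing condition says that the boundary vectors of the fitting tuples form an antichain: restricting to the generic tuples (support all of $[n]$ and every block non-empty), the function $\rho_T$ is determined pointwise by its thresholds $0\le\tau_1\le\cdots\le\tau_{d-1}\le n$, crossing is exactly incomparability in the coordinatewise order, and these threshold vectors range over the lattice $L(d-1,n)$ of partitions in a $(d-1)\times n$ box. The plan is to bound the number of generic tuples by the largest antichain of $L(d-1,n)$, which by the classical Sperner property of $L(d-1,n)$ equals its middle rank level and evaluates to $\tfrac{1}{d-1}\binom{n+d-2}{d-2}+O(n^{d-3})$ (for $d=3$ this recovers the construction of Example~\ref{Ex:counter}), and then to absorb the degenerate tuples, those with a gap or an empty block, into the error term. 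The main obstacle is exactly this last coordination: for $d\ge4$ the $d=3$ rigidity fails, so crossing no longer pins the witnesses to specific blocks, and one must show that the non-generic tuples occupy lower-dimensional strata contributing only $O(n^{d-3})$ while the generic ones genuinely obey the $L(d-1,n)$ antichain bound, even though the \bol~axiom constrains the two functions only on their common support.
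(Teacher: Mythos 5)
Your $d=3$ argument is correct and takes a genuinely different route from the paper's. You condition on a uniform ordering of $[n]$ and bound the number of fitting triples deterministically: the case analysis on the two \bol\ witnesses does force the middle-block spans to be strictly nested (both endpoints strictly), giving at most $(n+1)/2$ fitting triples with non-empty middle block, plus at most one with empty middle block, hence $(n+3)/2$ in expectation. The paper instead adjoins a delimiter element $n+1$, works in $S_{n+1}$, and assigns to each triple two disjoint events (delimiter between blocks $1,2$, or between blocks $2,3$), each of probability $\left((N_i+1)\binom{N_i}{|A_i^{(1)}|,|A_i^{(2)}|,|A_i^{(3)}|}\right)^{-1}$; pairwise disjointness yields the factor $2$ and the $(n+1)/2$, and the empty-middle-block triples are handled by Theorem~\ref{Th:nonuniform}. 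Your version trades the delimiter for an explicit combinatorial structure (the nested chain of spans), which is arguably more illuminating for $d=3$.

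The general-$d$ part, however, is a plan rather than a proof, and the gap you name yourself is the essential one. For two fitting tuples whose supports are proper (and different) subsets of $[n]$, the \bol\ condition only compares the block-index functions on the \emph{common} support, so incomparability of the threshold vectors in $L(d-1,n)$ does not follow; and the ``degenerate'' tuples (partial support or empty blocks) are not a priori a lower-order family --- every tuple in $\mathcal{F}$ could have support of size $n/2$, say --- so ``absorb them into the error term'' is the whole difficulty, not a technicality. (Separately, your evaluation of the largest antichain of $L(d-1,n)$ as $\frac{1}{d-1}\binom{n+d-2}{d-2}+O(n^{d-3})$ is not right for $d\geqslant 4$: the top rank level is asymptotically a strictly smaller constant times $n^{d-2}$, so the identification with the stated bound fails, although an even smaller bound would of course still suffice.) The paper closes the general case by a different mechanism that you would need to adopt or replace: it uses $d-2$ delimiters in $S_{n+d-2}$ and, for each $i$ and each $k\in[d-1]$, the event $E_i^{(k)}$ that the blocks appear in order with every adjacent pair \emph{except} the $k$-th separated by a delimiter; these $(d-1)m$ events are pairwise disjoint when all middle blocks are non-empty, which yields the main term $\frac{1}{d-1}\binom{n+d-2}{d-2}$, and tuples with an empty middle block form a \bol\ system of $(d-1)$-tuples, so induction on $d$ supplies exactly the $O(n^{d-3})$ error term. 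As written, your proposal establishes the theorem only for $d=3$.
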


For skew~\bol~systems of~$d$-tuples,~\heg~and Frankl~\cite{Hegedus} proved the following theorem.

\begin{dingli}[\heg~and Frankl~\cite{Hegedus}] \label{Th:FH}
Suppose that~$\mathcal{F}=\{(A_i^{(1)},\ldots,A_i^{(d)})\mid i\in[m]\}$ is a skew~\bol~system of~$d$-tuples,
and~$A_i^{(p)}\subseteq[n]$ for all~$i\in[m]$ and~$p\in[d]$. Then
\begin{equation*}
\sum_{i=1}^m{\binom{|A_i^{(1)}|+\cdots+|A_i^{(d)}|}{|A_i^{(1)}|,\ldots,|A_i^{(d)}|}}^{-1}\leqslant\binom{n+d-1}{d-1}.
\end{equation*}
\end{dingli}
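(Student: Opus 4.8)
The plan is to use the random-permutation (probabilistic) method, exploiting the fact that an inverse multinomial coefficient is exactly the probability that the blocks of a disjoint $d$-tuple occur in their natural order under a random linear order of $[n]$. Concretely, I would take a uniformly random permutation $\pi$ of $[n]$ and, for each $i\in[m]$, let $E_i$ be the event that every element of $A_i^{(1)}$ precedes every element of $A_i^{(2)}$, which in turn precede those of $A_i^{(3)}$, and so on up to $A_i^{(d)}$. Since the blocks $A_i^{(1)},\ldots,A_i^{(d)}$ are pairwise disjoint of sizes $s_1,\ldots,s_d$ summing to some $N\leqslant n$, the relative order of these $N$ elements is uniform, so $\Pr[E_i]=s_1!\cdots s_d!/N!=\binom{|A_i^{(1)}|+\cdots+|A_i^{(d)}|}{|A_i^{(1)}|,\ldots,|A_i^{(d)}|}^{-1}$. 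Hence the left-hand side of the claim equals $\sum_i\Pr[E_i]=\mathbb{E}\big[\,|S_\pi|\,\big]$, where $S_\pi=\{i:E_i\text{ holds}\}$, and it suffices to prove the pointwise bound $|S_\pi|\leqslant\binom{n+d-1}{d-1}$ for \emph{every} permutation $\pi$.

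To bound $|S_\pi|$, I would attach to each $i\in S_\pi$ a \emph{boundary vector} $(a_i^{(1)},\ldots,a_i^{(d-1)})$, where $a_i^{(p)}$ is the position in $\pi$ of the last element lying in $A_i^{(1)}\cup\cdots\cup A_i^{(p)}$, with the sentinel convention $a_i^{(p)}=0$ when these blocks are all empty. Because the first $p$ blocks are contained in the first $p+1$ blocks, each such vector is nondecreasing, $0\leqslant a_i^{(1)}\leqslant\cdots\leqslant a_i^{(d-1)}\leqslant n$. The number of nondecreasing $(d-1)$-tuples drawn from the $n+1$ values $\{0,1,\ldots,n\}$ is exactly the multiset coefficient $\binom{n+d-1}{d-1}$, so the theorem will follow once I show that $i\mapsto(a_i^{(1)},\ldots,a_i^{(d-1)})$ is injective on $S_\pi$.

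The crux, and the step I expect to be the main obstacle, is precisely this injectivity, where the skew condition enters. Given $i<j$ in $S_\pi$, the skew hypothesis supplies indices $p<q$ (so the coordinate $p$ satisfies $p\leqslant d-1$) and an element $x\in A_i^{(p)}\cap A_j^{(q)}$. Because $E_i$ holds and $x\in A_i^{(p)}$, the position of $x$ is at most $a_i^{(p)}$; because $E_j$ holds and $x$ lies in block $q>p$ of the $j$-tuple, $x$ comes strictly after all elements of the first $q-1\geqslant p$ blocks of the $j$-tuple, so its position strictly exceeds $a_j^{(p)}$. Combining the two gives $a_j^{(p)}<a_i^{(p)}$, so the two boundary vectors differ in coordinate $p$, which yields injectivity and completes the argument. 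The delicate points to get right are the conventions for empty blocks (handled by the sentinel $0$, which also rules out two all-empty tuples via the skew condition) and the verification that the \emph{asymmetric} skew hypothesis, used only in the direction $i<j$, still forces a strict inequality in a single well-chosen coordinate rather than merely some unspecified difference between the vectors. I would finally remark that for $d=2$ this recovers the bound $n+1=\binom{n+1}{1}$ of Theorem~\ref{Th:skew-nonuniform}, since the boundary vector collapses to the single coordinate $a_i^{(1)}$, which is then strictly decreasing along $S_\pi$.
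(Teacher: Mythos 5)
Your proof is correct, but it reaches Theorem~\ref{Th:FH} by a genuinely different route from the paper. The paper never proves this statement from scratch: it proves the stronger Theorem~\ref{Th:d-skew-nonuniform} by taking a uniform random permutation of $[n+d-1]$, using the $d-1$ extra elements as delimiters that must separate consecutive blocks; the skew condition then makes the events pairwise \emph{disjoint}, so their probabilities sum to at most $1$, yielding $\sum_i\bigl(\binom{N_i+d-1}{d-1}\binom{N_i}{|A_i^{(1)}|,\ldots,|A_i^{(d)}|}\bigr)^{-1}\leqslant 1$ with $N_i=|A_i^{(1)}|+\cdots+|A_i^{(d)}|$, and Theorem~\ref{Th:FH} follows since $N_i\leqslant n$. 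You instead randomize only over permutations of $[n]$, accept that your events $E_i$ overlap, and control the overlap deterministically via the first-moment identity $\sum_i\Pr[E_i]=\mathbb{E}\,|S_\pi|$ together with the pointwise bound $|S_\pi|\leqslant\binom{n+d-1}{d-1}$. That bound rests on your injection of $S_\pi$ into nondecreasing $(d-1)$-tuples over $\{0,1,\ldots,n\}$, and your verification is sound: for $i<j$ in $S_\pi$ the skew condition supplies $p<q$ and $x\in A_i^{(p)}\cap A_j^{(q)}$, and the chain $a_j^{(p)}<\mathrm{pos}(x)\leqslant a_i^{(p)}$ (with the sentinel $0$ correctly handling empty prefixes, and two all-empty tuples being excluded by the skew condition itself) forces the boundary vectors to differ in coordinate $p\leqslant d-1$. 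The trade-off: the paper's delimiter construction buys the refined per-tuple weighting of Theorem~\ref{Th:d-skew-nonuniform}, which depends on $N_i$ rather than on $n$ and is strictly stronger when the tuples are small; your argument buys a self-contained proof that never enlarges the ground set and isolates a clean deterministic lemma about how many tuples can be simultaneously ``sorted'' by a single linear order, but it does not immediately recover the sharper per-tuple inequality.
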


Although this result is already tight, using similar probabilistic argument, we can still improve it as follows.

\begin{dingli} \label{Th:d-skew-nonuniform}
Suppose that~$\mathcal{F}=\{(A_i^{(1)},\ldots,A_i^{(d)})\mid i\in[m]\}$ is a skew~\bol~system of~$d$-tuples. Then
\begin{equation*}
\sum_{i=1}^m\left(\binom{|A_i^{(1)}|+\cdots+|A_i^{(d)}|+d-1}{d-1}\binom{|A_i^{(1)}|+\cdots+|A_i^{(d)}|}{|A_i^{(1)}|,\ldots,|A_i^{(d)}|}\right)^{-1}\leqslant 1.
\end{equation*}
\end{dingli}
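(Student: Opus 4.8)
The plan is to recast each summand as a single inverse multinomial coefficient and then realize it as the probability of an explicit event for a uniformly random linear order on an augmented ground set. First I would simplify the weight. Writing $s_i=|A_i^{(1)}|+\cdots+|A_i^{(d)}|$ and $a_i^{(p)}=|A_i^{(p)}|$, and using $\binom{s_i}{a_i^{(1)},\ldots,a_i^{(d)}}=\tfrac{s_i!}{a_i^{(1)}!\cdots a_i^{(d)}!}$ together with $\binom{s_i+d-1}{d-1}=\tfrac{(s_i+d-1)!}{(d-1)!\,s_i!}$, the product telescopes to
\[
\binom{s_i+d-1}{d-1}\binom{s_i}{a_i^{(1)},\ldots,a_i^{(d)}}
=\binom{s_i+d-1}{a_i^{(1)},\ldots,a_i^{(d)},\,d-1}
=\frac{(s_i+d-1)!}{a_i^{(1)}!\cdots a_i^{(d)}!\,(d-1)!}.
\]
Thus the theorem is equivalent to $\sum_{i=1}^m\binom{s_i+d-1}{a_i^{(1)},\ldots,a_i^{(d)},\,d-1}^{-1}\le 1$, and the appearance of $d-1$ extra cells in the multinomial is the structural clue for the construction.

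Next I would set up the probabilistic experiment. Adjoin $d-1$ new distinguishable \emph{separator} symbols $\ast_1,\ldots,\ast_{d-1}$ to $[n]$, forming a ground set $U$ of size $n+d-1$, and pick a uniformly random linear order $\pi$ of $U$. The separators split $[n]$ into $d$ consecutive (possibly empty) \emph{global blocks} $G_1,\ldots,G_d$, where $G_p$ is the set of elements of $[n]$ preceded by exactly $p-1$ separators. For each $i$ let $E_i$ be the event that, after restricting $\pi$ to $S_i\cup\{\ast_1,\ldots,\ast_{d-1}\}$ (a set of size $s_i+d-1$), the induced order reads $A_i^{(1)}$, one separator, $A_i^{(2)}$, one separator, $\ldots$, $A_i^{(d)}$. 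Counting linear orders of this set gives $\Pr[E_i]=\frac{a_i^{(1)}!\cdots a_i^{(d)}!\,(d-1)!}{(s_i+d-1)!}$, which is precisely the $i$-th summand. Hence it suffices to prove the events $E_i$ are pairwise disjoint, for then $\sum_i\Pr[E_i]=\Pr[\bigcup_i E_i]\le 1$.

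The heart of the argument is this disjointness, and here the skew \bol~condition enters. The key observation is that $E_i$ forces $A_i^{(p)}\subseteq G_p$ for every $p$: since restriction preserves the relative order of the separators, an element lying in the $p$-th block of the pattern is preceded by exactly $p-1$ separators globally, hence lies in $G_p$. Now suppose $E_i$ and $E_j$ both occur with $i<j$. The skew condition supplies indices $p<q$ and an element $x\in A_i^{(p)}\cap A_j^{(q)}$; then $x\in G_p$ (from $E_i$) and $x\in G_q$ (from $E_j$), contradicting $G_p\cap G_q=\emptyset$ since $p\neq q$. Therefore no two events coincide, which completes the proof.

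I expect the only genuine obstacle to be locating the correct experiment: once one recognizes the summand as $\binom{s_i+d-1}{a_i^{(1)},\ldots,a_i^{(d)},\,d-1}^{-1}$, introducing exactly $d-1$ separators and reading the blocks off the separator positions makes both the probability computation and the disjointness essentially automatic. As a sanity check, the specialization $d=2$ recovers the first inequality of Theorem~\ref{Th:skew-nonuniform}; and applying the same experiment to an ordinary \bol~system (where the condition holds for all ordered pairs) yields the identical weighted bound, which is consistent with Example~\ref{Ex:counter} showing that the \emph{unweighted} sum can exceed $1$.
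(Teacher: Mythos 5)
Your proposal is correct and is essentially the paper's own proof: the paper likewise adjoins $d-1$ delimiter elements to $[n]$, takes a uniformly random permutation of the augmented set, defines $E_i$ as the event that the blocks $A_i^{(1)},\ldots,A_i^{(d)}$ appear in order with one delimiter between consecutive blocks, computes $\mathbb{P}(E_i)$ as the stated inverse product, and derives pairwise disjointness of the $E_i$ from the skew condition exactly as you do. The only cosmetic differences are that you telescope the weight into a single multinomial coefficient up front and phrase disjointness via the global blocks $G_p$, whereas the paper computes the probability in two stages (placing delimiters, then placing each $A_i^{(k)}$) and argues disjointness directly from the separating delimiters.
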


Note that~$|A_i^{(1)}|+\cdots+|A_i^{(d)}|\leqslant n$, so Theorem~\ref{Th:d-skew-nonuniform} implies Theorem~\ref{Th:FH}.
Theorem~\ref{Th:d-skew-nonuniform} also provides an upper bound for the size of uniform skew~\bol~systems.
If we further assume that~$|A_i^{(p)}|=a_p,\forall i\in[m],p\in[d]$,
then we have~$m\leqslant\binom{n+d-1}{d-1}\binom{a_1+\cdots+a_d}{a_1,\ldots,a_d}$ where~$n$ denotes the size of the ground set.
However, this upper bound is not so good. Actually, we can prove the following upper bound, which is confirmed to be tight by Example~\ref{Ex:sym}.
(Note that a~\bol~system is also a skew~\bol~system.)

\begin{dingli} \label{Th:d-skew-uniform}
Suppose that~$\mathcal{F}=\{(A_i^{(1)},\ldots,A_i^{(d)})\mid i\in[m]\}$ is a skew~\bol~system of~$d$-tuples, and~$|A_i^{(p)}|=a_p,\forall i\in[m],p\in[d]$. Then
\begin{equation*}
m\leqslant\binom{a_1+\cdots+a_d}{a_1,\ldots,a_d}.
\end{equation*}
\end{dingli}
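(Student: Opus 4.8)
The plan is to extend the exterior-algebra proof of Theorem~\ref{Th:skew-uniform} (Frankl's uniform skew bound for set-pairs) to $d$-tuples. Write $N=a_1+\cdots+a_d$ and $b_k=a_1+\cdots+a_k$, so $b_0=0$ and $b_d=N$. Over an infinite field I fix an $N$-dimensional space $V$, assign to every ground element $x$ a vector $v_x\in V$, and, crucially, also fix a complete flag $U_0\subset U_1\subset\cdots\subset U_N=V$ with $\dim U_j=j$; all of these data will be chosen generic. For $k\in[d-1]$ let $\pi_k\colon V\to V/U_{b_{k-1}}$ be the quotient map, so $\dim(V/U_{b_{k-1}})=N-b_{k-1}$. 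To the tuple $(A_i^{(1)},\ldots,A_i^{(d)})$ I would associate
\[
L_i=\bigotimes_{k=1}^{d-1}\Bigl(\bigwedge_{x\in A_i^{(k)}}\pi_k(v_x)\Bigr)\in P:=\bigotimes_{k=1}^{d-1}\Lambda^{a_k}\bigl(V/U_{b_{k-1}}\bigr).
\]
The decisive point is the dimension count $\dim P=\prod_{k=1}^{d-1}\binom{N-b_{k-1}}{a_k}=\binom{a_1+\cdots+a_d}{a_1,\ldots,a_d}$, the last multinomial factor $\binom{a_d}{a_d}=1$ being absorbed. Working in the \emph{quotients} $V/U_{b_{k-1}}$, rather than taking nested wedges inside $V$, is exactly what pins the ambient dimension to the multinomial and avoids the larger ``Cartan'' dimension one gets from flags.

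Next I would build the dual vectors and the pairing matrix. For each $j$ set $G_j^{(k)}=A_j^{(k+1)}\cup\cdots\cup A_j^{(d)}$, of size $N-b_k=(N-b_{k-1})-a_k$, and put
\[
R_j=\bigotimes_{k=1}^{d-1}\Bigl(\bigwedge_{y\in G_j^{(k)}}\pi_k(v_y)\Bigr)\in\bigotimes_{k=1}^{d-1}\Lambda^{\,N-b_k}\bigl(V/U_{b_{k-1}}\bigr)\cong P^{*},
\]
using that the wedge pairing $\Lambda^{a_k}(Y)\times\Lambda^{\dim Y-a_k}(Y)\to\Lambda^{\dim Y}(Y)$ into the one-dimensional top power is perfect for $Y=V/U_{b_{k-1}}$. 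Then $W_{ij}:=\langle L_i,R_j\rangle$ is the product over $k$ of the top wedges $\bigwedge_{x\in A_i^{(k)}}\pi_k(v_x)\wedge\bigwedge_{y\in G_j^{(k)}}\pi_k(v_y)$ in $\Lambda^{\,N-b_{k-1}}(V/U_{b_{k-1}})$; each factor is a single $(N-b_{k-1})$-fold wedge in an $(N-b_{k-1})$-dimensional quotient, so it is nonzero precisely when $\{\pi_k(v_z):z\in A_i^{(k)}\cup G_j^{(k)}\}$ are independent.

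The heart of the argument is to read off when $W_{ij}=0$. By genericity the $k$-th factor vanishes iff $A_i^{(k)}\cap G_j^{(k)}\neq\emptyset$: an overlap produces a repeated vector and kills the wedge, while disjointness makes the $N-b_{k-1}$ distinct vectors $\{v_z:z\in A_i^{(k)}\cup G_j^{(k)}\}$ together with a basis of $U_{b_{k-1}}$ independent. Hence $W_{ij}\neq0$ iff $A_i^{(k)}\cap G_j^{(k)}=\emptyset$ for every $k$, i.e.\ iff $A_i^{(p)}\cap A_j^{(q)}=\emptyset$ for all $p<q$, which is exactly the negation of the skew condition for the ordered pair $(i,j)$. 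For $i<j$ the skew hypothesis gives $W_{ij}=0$, while within-tuple disjointness gives $W_{ii}\neq0$, so $W$ is triangular with nonzero diagonal and has rank $m$. Since $W$ factors as the $m\times\dim P$ matrix with rows $L_i$ times the $\dim P\times m$ matrix with columns $R_j$, we conclude $m=\operatorname{rank}W\le\dim P=\binom{a_1+\cdots+a_d}{a_1,\ldots,a_d}$; specialising to $d=2$ recovers Theorem~\ref{Th:skew-uniform}.

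The main obstacle is the simultaneous genericity. I must choose the $v_x$ and the reference flag $U_\bullet$ so that, for the finitely many triples $(i,j,k)$ occurring, every ``disjointness $\Rightarrow$ independence'' determinant is nonzero at once; each such determinant is a polynomial in the chosen coordinates that is not identically zero (one can exhibit one independent configuration), so avoiding the finite union of their zero sets yields a valid choice over any infinite field, and over a finite field one passes to an extension. The point demanding the most care is that $U_\bullet$ can be taken in general position relative to all the $v_x$, so that each $\pi_k$ has the full target dimension $N-b_{k-1}$ and the images $\pi_k(v_x)$ remain generic; this legitimises the quotient construction, and it is precisely here—choosing the target spaces to be quotients by a \emph{fixed} flag rather than spans attached to each tuple—that tightness of the multinomial bound is secured.
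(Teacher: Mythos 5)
Your proof is correct, and it is a genuinely different implementation of the exterior-algebra strategy from the paper's. The paper first proves the subspace version (Theorem~\ref{Th:d-space}): it uses general-position projections $\phi_k$ onto spaces $V_k$ of dimension $a_1+\cdots+a_k$, attaches to tuple $i$ the product functional $f_i=\prod_k f_{i,k}$ with $f_{i,k}(\beta)=\alpha(i,k,1)\wedge\cdots\wedge\alpha(i,k,k-1)\wedge\beta$ on $\bigwedge^{a_k}V_k$, shows $f_1,\ldots,f_m$ are linearly independent in a tensor product of dimension $\prod_{k=2}^{d}\binom{a_1+\cdots+a_k}{a_k}$ by evaluating at test vectors $\xi_j$, and then deduces the set version by sending $A\subseteq[n]$ to the coordinate subspace $\spa\{\epsilon_a\mid a\in A\}$. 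You work with sets directly, realize the mirror-image factorization $\prod_{k=1}^{d-1}\binom{a_k+\cdots+a_d}{a_k}$ of the same multinomial by quotienting out a fixed generic flag instead of projecting onto initial partial sums, and pair the $k$-th part of tuple $i$ against the tail $A_j^{(k+1)}\cup\cdots\cup A_j^{(d)}$, so that triangularity of $W$ follows from the skew condition and the bound comes from the rank factorization $W=LR$; in the paper an offending intersection $A_i^{(p)}\cap A_j^{(q)}$ kills the factor indexed by $q$, in your setup it kills the factor indexed by $p$. What the paper's route buys is the stronger subspace theorem for free, which your vector-assignment construction does not yield without reinstating general-position arguments for images and intersections of subspaces; what your route buys is a self-contained one-step proof for sets in which the genericity requirements (any $N$ of the $v_x$ independent, the flag transverse to the relevant spans) are elementary and the multinomial appears immediately as $\dim P$. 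Both arguments are sound.
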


Like Table 1, the following table give a brief summary for~\bol-type theorem on~$d$-tuples.

\begin{table}[h]
\centering
\begin{tabular}{|l|c|c|}
\hline
 & \bol~systems & Skew~\bol~systems \\
\hline
 Uniform & Solved: Theorem~\ref{Th:d-skew-uniform} & Solved: Theorem~\ref{Th:d-skew-uniform} \\
\hline
\multirow{2}*{Nonuniform} & Conjecture:~\heg~and Frankl~\cite{Hegedus} & Solved:~\heg~and Frankl~\cite{Hegedus} \\
~ & Partly solved: Example~\ref{Ex:counter}$^\star$ and Theorem~\ref{Th:d-nonuniform} & Improved: Theorem~\ref{Th:d-skew-nonuniform}$^\star$ \\
\hline
\multicolumn{3}{|l|}{$\star$: Tian and Wu~\cite{Tian} also have contribution on these two questions.} \\
\hline
\end{tabular}
\caption{A brief map for~\bol-type theorem on~$d$-tuples}
\end{table}

\newpage

As an analog,~\bol~systems and skew~\bol~systems of vector spaces are defined as follows.

\begin{dingyi}
Let~$\mathcal{P}=\{(A_i,B_i)\mid i\in[m]\}$ be a family of pairs of subspaces of a fixed vector space~$V$, where~$\dim(A_i\cap B_i)=0$.
We say that~$\mathcal{P}$ is a\emph{~\bol~system} if~$\dim(A_i\cap B_j)>0,\forall i\neq j$,
and a \emph{skew~\bol~system} if~$\dim(A_i\cap B_j)>0,\forall i<j$.
\end{dingyi}

In 1977, employing powerful algebraic techniques,~\lov~\cite{LinearSpaces,Lov1,Lov2} extended Theorem~\ref{Th:skew-uniform} to the domain of matroids.
This seminal extension not only opened a new avenue for solving combinatorial problems using the exterior product method,
but also elevated the~\bol-type theorem to a central position within the field of extremal set theory.
Perhaps for the sake of simplicity,~\fu~\cite{Threshold} reformulated it as follows for real vector spaces.

\begin{dingli}[\lov~\cite{LinearSpaces,Lov1,Lov2}]\label{Th:Lovasz}
Let~$\mathcal{P}=\{(A_i,B_i)\mid i\in[m]\}$ be a (skew)~\bol~system of subspaces of~$\mathbb{R}^n$.
If~$\dim(A_i)=a,\dim(B_i)=b$ for all~$i$, then~$m\leqslant\binom{a+b}{a}$.
\end{dingli}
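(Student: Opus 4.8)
The plan is to prove the statement for skew~\bol~systems, which suffices because every~\bol~system is in particular a skew~\bol~system. I would argue through the exterior algebra, following~\lov, but the naive wedge-product argument carried out in~$\Lambda^a(\mathbb{R}^n)$ only delivers the weak bound~$m\le\binom{n}{a}$; the essential preliminary step is therefore a reduction to the case~$n=a+b$, after which the relevant exterior power has the right dimension.

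First I would pass to a generic linear map~$T\colon\mathbb{R}^n\to\mathbb{R}^{a+b}$ and replace each~$A_i,B_i$ by~$T(A_i),T(B_i)$. For~$i<j$, pick a nonzero vector~$v_{ij}\in A_i\cap B_j$; since~$T(v_{ij})\in T(A_i)\cap T(B_j)$, the skew intersection condition is preserved as soon as~$T(v_{ij})\ne0$, and this holds automatically for generic~$T$. For the diagonal, $\dim(A_i\cap B_i)=0$ gives~$\dim(A_i+B_i)=a+b$, so it suffices that~$T$ restrict injectively to each of these~$(a+b)$-dimensional spaces; then~$\dim T(A_i)=a$, $\dim T(B_i)=b$ and~$\dim(T(A_i)\cap T(B_i))=0$. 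Each requirement — $T(v_{ij})\ne0$ and~$T|_{A_i+B_i}$ injective — is the nonvanishing of a fixed nonzero polynomial in the entries of~$T$, so a generic~$T$ (which exists because~$\mathbb{R}$ is infinite) meets all of them at once. After this reduction I may assume~$n=a+b$.

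Working now in~$\mathbb{R}^{a+b}$, I would choose bases and form the decomposable vectors~$\alpha_i\in\Lambda^a(\mathbb{R}^{a+b})$ and~$\beta_j\in\Lambda^b(\mathbb{R}^{a+b})$ representing~$A_i$ and~$B_j$. Fixing a generator of the one-dimensional space~$\Lambda^{a+b}(\mathbb{R}^{a+b})$, the wedge~$\alpha_i\wedge\beta_j$ becomes a scalar~$M_{ij}$, and by the dimension count~$\dim A_i+\dim B_j=a+b$ one has~$M_{ij}\ne0$ exactly when~$A_i\cap B_j=0$. Thus~$M_{ii}\ne0$ for every~$i$, while~$M_{ij}=0$ whenever~$i<j$; the matrix~$M=(M_{ij})$ is lower triangular with nonzero diagonal, hence nonsingular, so~$\mathrm{rank}(M)=m$. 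Since~$M_{ij}=\langle\alpha_i,\beta_j\rangle$ for a bilinear pairing, $M$ factors as a product whose left factor records the coordinates of the~$\alpha_i$ in~$\Lambda^a(\mathbb{R}^{a+b})$, so~$\mathrm{rank}(M)\le\dim\Lambda^a(\mathbb{R}^{a+b})=\binom{a+b}{a}$, giving~$m\le\binom{a+b}{a}$.

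I expect the genuine obstacle to lie in the reduction step rather than in the exterior-algebra bookkeeping, which is routine. The point to verify carefully is that the several polynomial conditions cutting out the admissible maps~$T$ are each not identically zero, so that their product has a common nonzero point; this amounts to exhibiting, for each~$i$, a single linear map injective on~$A_i+B_i$, which is immediate since~$\dim(A_i+B_i)=a+b$ equals the target dimension. One should also double-check that the skew hypothesis is used in exactly the right direction, namely that the upper-triangular vanishing~$M_{ij}=0$ for~$i<j$ is precisely what the condition~$\dim(A_i\cap B_j)>0$ supplies, while no information about the entries~$M_{ij}$ with~$i>j$ is needed.
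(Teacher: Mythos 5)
Your proposal is correct and takes essentially the same approach as the paper: the paper does not reprove Theorem~\ref{Th:Lovasz} itself, but its proof of the generalization Theorem~\ref{Th:d-space} is exactly this argument specialized to $d=2$ — a general-position linear map onto $\mathbb{R}^{a+b}$ (Lemma~\ref{GP3}) preserving the relevant intersection dimensions, wedge products of the images, and the triangular pairing ($f_i(\xi_i)\neq 0$, $f_i(\xi_j)=0$ for $i<j$) yielding linear independence inside a space of dimension $\binom{a+b}{a}$. Your rank factorization of the matrix $M$ is just a repackaging of the paper's linear-independence-of-functionals step.
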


The conclusion of Theorem~\ref{Th:Lovasz} holds for vector spaces over an arbitrary field.
For the case of~$\mathbb{Q}^n$ or~$\mathbb{C}^n$, the proofs remain essentially the same. For other fields, such as finite fields,
some technical adaptations may be necessary. More details about this can be found in~\cite{Book}.
After~\fu, other scholars~(such as Scott and Wilmer~\cite{Scott}, and Yu, Kong, Xi, Zhang, and Ge~\cite{HemiBundled}) have tended to express their theorems in the same way,
even though they may hold more generally.

\bol-type theorems and their variations have attracted significant interest among mathematicians.
In 1984,~\fu~\cite{Threshold} proved the threshold (or~$t$-intersecting) version of~\bol-type theorem (for both sets and spaces).
This work laid the foundation for further extensions by Zhu~\cite{tl}, Talbot~\cite{Inequality}, and Kang, Kim, and Kim~\cite{Inequality2}.
Recent research has explored even more diverse variations of~\bol-type theorems,
such as those concerning partitions of sets~\cite{Alon}, affine spaces~\cite{AffineSpaces}, and weakly~\bol~systems~\cite{weakly,Tuza}.

It is natural to extend the concept of~\bol~systems and skew~\bol~systems to~$d$-tuples of spaces.

\begin{dingyi}
Let~$\mathcal{F}=\{(A_i^{(1)},\ldots,A_i^{(d)})\mid i\in[m]\}$, where~$A_i^{(k)}\leqslant V=\mathbb{R}^n$.
Suppose that~$\dim(A_i^{(1)}+\cdots+A_i^{(d)})=\dim(A_i^{(1)})+\cdots+\dim(A_i^{(d)})$ for any~$i\in[m]$.
Then~$\mathcal{F}$ is a~\emph{\bol~system (skew~\bol~system)} if for any~$i\neq j$ \emph{(}$i<j$\emph{)}, there exist~$p<q$ such that~$A_i^{(p)}\cap A_j^{(q)}\neq\{0\}$.
\end{dingyi}

Our last result is the following theorem, which determines the maximum size of a skew~\bol~system of~$d$-tuples of spaces.
This theorem strengthens Theorem~\ref{Th:d-skew-uniform}, and generalizes Theorem~\ref{Th:Lovasz}.

\begin{dingli}\label{Th:d-space}
Let~$\mathcal{F}=\{(A_i^{(1)},\ldots,A_i^{(d)})\mid i\in[m]\}$ be a skew~\bol~system of~$d$-tuples of spaces, where~$A_i^{(k)}\leqslant V=\mathbb{R}^n$.
Suppose that~$\dim(A_i^{(k)})=a_k$ for every~$i\in[m]$ and~$k\in[d]$. Then we have
\begin{equation*}
m\leqslant\binom{a_1+\cdots+a_d}{a_1,\ldots,a_d}.
\end{equation*}
\end{dingli}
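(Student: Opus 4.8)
The plan is to generalize the exterior-algebra argument behind Theorem~\ref{Th:Lovasz} (the case $d=2$) in three stages: a generic projection that normalizes the ambient dimension, a translation of the two defining conditions into (non)vanishing of wedge products, and a triangular-pairing argument forcing linear independence of $\binom{a_1+\cdots+a_d}{a_1,\ldots,a_d}$-many vectors. Throughout write $a=a_1+\cdots+a_d$; if $a>n$ then no tuple can be internally direct, so the family is empty and the bound is trivial, and I may assume $a\le n$.

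First I would reduce to the case $n=a$. Choose a generic linear map $\phi\colon\mathbb{R}^n\to\mathbb{R}^a$. For each fixed $i$ the sum $A_i^{(1)}+\cdots+A_i^{(d)}$ is direct of dimension exactly $a$, so a generic $\phi$ restricts to an isomorphism of it onto $\mathbb{R}^a$; hence the images $\phi(A_i^{(k)})$ again form a direct sum, now equal to all of $\mathbb{R}^a$, with $\dim\phi(A_i^{(k)})=a_k$. For each skew pair $i<j$ pick one witness $0\ne v_{ij}\in A_i^{(p)}\cap A_j^{(q)}$ with $p<q$; a generic $\phi$ keeps all of the finitely many $\phi(v_{ij})$ nonzero, so $\phi(A_i^{(p)})\cap\phi(A_j^{(q)})\ne\{0\}$ and the skew structure survives. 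Thus I may assume each tuple yields a decomposition $\mathbb{R}^a=A_i^{(1)}\oplus\cdots\oplus A_i^{(d)}$.

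Next I would represent each part by a blade $\alpha_i^{(k)}\in\bigwedge^{a_k}\mathbb{R}^a$ and use the dictionary: for subspaces $P,Q$ with blades $\alpha,\beta$ one has $\alpha\wedge\beta\ne 0$ iff $P\cap Q=\{0\}$. Consequently $\alpha_i^{(1)}\wedge\cdots\wedge\alpha_i^{(d)}$ is a nonzero element of $\bigwedge^a\mathbb{R}^a\cong\mathbb{R}$ for every $i$, while the skew hypothesis says that for $i<j$ there is a pair $p<q$ with $\alpha_i^{(p)}\wedge\alpha_j^{(q)}=0$. The goal is to package this into a bilinear pairing $\langle X_i,Y_j\rangle$ that vanishes for $i<j$ (forced by a bad pair), is nonzero for $i=j$, and has the $X_i$ constrained to a space $W$ with $\dim W=\binom{a}{a_1,\ldots,a_d}$; the usual triangular argument (pair $\sum c_iX_i=0$ with $Y_k$ for the largest surviving index $k$) then yields linear independence of the $X_i$ and hence $m\le\dim W$. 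The required dimension is dictated by $\binom{a}{a_1,\ldots,a_d}=\prod_{k=1}^{d-1}\binom{a-a_1-\cdots-a_{k-1}}{a_k}$, which points to building $X_i$ from the nested flag blades $\gamma_i^{(k)}=\alpha_i^{(1)}\wedge\cdots\wedge\alpha_i^{(k)}$ and then, after fixing one generic complete flag of $\mathbb{R}^a$, retaining only the associated leading (Schubert-cell) data, whose index set is exactly the functions $[a]\to[d]$ with fibre sizes $a_1,\ldots,a_d$, a set of size $\binom{a}{a_1,\ldots,a_d}$. Example~\ref{Ex:sym} shows this count is best possible.

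The main obstacle is precisely the step with no analogue at $d=2$: converting the \emph{existential} skew condition into the vanishing of a single pairing while keeping the ambient space of the $X_i$ of dimension only $\binom{a}{a_1,\ldots,a_d}$. These demands conflict: a product $\bigotimes_{p<q}(\alpha_i^{(p)}\wedge\alpha_j^{(q)})$ does make the disjunction collapse to genuine vanishing (a product dies as soon as one factor does, and on the diagonal every factor is nonzero), but its natural ambient space has dimension $\prod_{p<q}\binom{a}{a_p}$, far larger than the multinomial; conversely the nested blades $\gamma_i^{(k)}$ live in a space of the right size, yet it is not transparent that a single bad pair $p<q$ forces the corresponding pairing to vanish. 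Reconciling the two, presumably by using the fixed generic flag to pass to leading terms so that a bad pair for $i<j$ becomes a strict inequality between the Schubert symbols of $i$ and $j$ and thus triangularity, is the heart of the matter. I would also flag that the tempting induction on $d$ by merging $A_i^{(d-1)}\oplus A_i^{(d)}$ fails, since a tuple whose only bad pair is $(d-1,d)$ stops being skew after the merge; the disjunction must be handled directly rather than dissolved by induction.
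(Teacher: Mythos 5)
You have correctly assembled the scaffolding --- generic projection, blades, the triangularity argument, and the target dimension $\binom{a}{a_1,\ldots,a_d}$ --- and you have put your finger on exactly the step that matters: converting the existential condition ``there exist $p<q$ with $A_i^{(p)}\cap A_j^{(q)}\neq\{0\}$'' into the vanishing of a single pairing whose ambient space has only multinomial dimension. But you leave that step unresolved (you call it ``the heart of the matter''), and the proposed escape via Schubert symbols of nested flag blades relative to a generic flag is only a conjecture; as written the argument does not close. So there is a genuine gap, and it is the load-bearing one.

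The paper's resolution is to group the pairs $(p,q)$ by their second coordinate and to shrink the ambient space separately for each coordinate. For each $k\in\{2,\ldots,d\}$ one takes its own general-position map $\phi_k\colon V\to V_k=\mathbb{R}^{a_1+\cdots+a_k}$ (Lemma~\ref{GP3}) preserving the directness of $A_i^{(1)}+\cdots+A_i^{(k)}$ and all intersection dimensions $\dim(A_i^{(p)}\cap A_j^{(q)})$ with $p,q\le k$. Writing $\alpha(i,k,p)$ for a blade of $\phi_k(A_i^{(p)})$, the functional $f_{i,k}(\beta)=\alpha(i,k,1)\wedge\cdots\wedge\alpha(i,k,k-1)\wedge\beta$ on $\bigwedge^{a_k}V_k$ is scalar-valued precisely because the total degree equals $\dim V_k$, so it lives in a dual space of dimension $\binom{a_1+\cdots+a_k}{a_k}$; the product $f_i=\prod_{k=2}^{d}f_{i,k}$ then sits in a tensor product of dimension $\prod_{k=2}^{d}\binom{a_1+\cdots+a_k}{a_k}=\binom{a_1+\cdots+a_d}{a_1,\ldots,a_d}$, exactly the telescoping you noted. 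The disjunction collapses for free: if $i<j$ has a bad pair $p<q$, then $\alpha(i,q,p)$ already appears among the wedge factors defining $f_{i,q}$, so $f_{i,q}(\alpha(j,q,q))=0$ and the whole product $f_i(\xi_j)$ vanishes, while on the diagonal every factor is a full wedge of a basis of $V_k$ and is nonzero. This is precisely the reconciliation you were looking for: your obstruction (``the product over all $p<q$ lives in too large a space'') disappears once the $\binom{d}{2}$ pairs are bundled into $d-1$ factors and each factor is evaluated in its own reduced space $V_q$ rather than in a single copy of $\mathbb{R}^{a}$.
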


This paper is organized as follows: in Section 2, we prove Theorem~\ref{Th:d-nonuniform} and Theorem~\ref{Th:d-skew-nonuniform},
in Section 3, we prove Theorem~\ref{Th:d-space}, and obtain Theorem~\ref{Th:d-skew-uniform} from it.

\section{Proof of Theorem~\ref{Th:d-nonuniform} and Theorem~\ref{Th:d-skew-nonuniform}}

For a fixed~$n>0$, denote~$S_n$ the group of all permutations on~$[n]$.
For a~$\sigma\in S_n$ and~$A\subseteq [n]$, denote~$\sigma(A):=\{\sigma(a)\mid a\in A\}$ the image of~$A$ over~$\sigma$.
For~$\sigma\in S_n$ and~$A,B\subseteq [n]$, obviously~$\sigma(A)\cap\sigma(B)=\emptyset$ if and only if~$A\cap B=\emptyset$.
Further more, for a family of~$d$-tuples~$\mathcal{F}=\{(A_i^{(1)},\ldots,A_i^{(d)})\mid i\in[m]\}$, where~$A_i^{(k)}\subseteq [n],\forall i\in[m],k\in[d]$,
and any~$\sigma\in S_n$,
\begin{equation*}
\mathcal{F}'=\{(\sigma(A_i^{(1)}),\ldots,\sigma(A_i^{(d)}))\mid i\in[m]\}
\end{equation*}
is a (skew)~\bol~system, if and only if~$\mathcal{F}$ is a (skew)~\bol~system.
We will prove Theorem~\ref{Th:d-nonuniform} and Theorem~\ref{Th:d-skew-nonuniform} with the tool of random permutations, and begin with Theorem~\ref{Th:d-skew-nonuniform}.

\begin{proof}[Proof of Theorem~\ref{Th:d-skew-nonuniform}]
Without lose of generality, we assume~$A_i^{(k)}\subseteq[n],\forall i\in[m],k\in[d]$ for some~$n$.
Pick a random permutation~$\sigma\in S_{n+d-1}$ uniformly, and it gives the~$n+d-1$ elements a new order.
We will use the elements in~$\{n+1,\ldots,n+d-1\}$ as delimiters.
Let~$E_i$ be the event that elements in~$\sigma(A_i^{(1)}),\ldots,\sigma(A_i^{(d)})$ appear in order,
and each pair of neighbors are divided by a delimiter. Formally speaking, define
\begin{equation*}
\begin{split}
  E_i:= & \{\sigma\in S_{n+d-1}\mid \sigma(a_1)<\sigma(b_1)<\sigma(a_2)<\cdots<\sigma(b_{d-1})<\sigma(a_d),\forall a_k\in A_i^{(k)}, \\
    & \textrm{where}~\{b_1,\ldots,b_{d-1}\}=\{n+1,\ldots,n+d-1\}\}.
\end{split}
\end{equation*}
To calculate the probability of~$E_i$, we only need to consider about elements in~$A_i^{(1)}\cup\cdots\cup A_i^{(d)}\cup\{n+1,\ldots,n+d-1\}$.
We put these elements into the~$|A_i^{(1)}|+\cdots+|A_i^{(d)}|+d-1$ slots.
There are~$\binom{|A_i^{(1)}|+\cdots+|A_i^{(d)}|+d-1}{d-1}$ many of places to put the last~$d-1$ elements, regardless of their inner order,
and exactly one of them is what we needed. After fixed the elements in~$\{n+1,\ldots,n+d-1\}$, the rest~$|A_i^{(1)}|+\cdots+|A_i^{(d)}|$ slots are divided into~$d$ parts.
We need to put each of~$A_i^{(k)}$ into the correct place, and the probability of it is
\begin{equation*}
\binom{|A_i^{(1)}|+\cdots+|A_i^{(d)}|}{|A_i^{(1)}|}^{-1}\binom{|A_i^{(2)}|+\cdots+|A_i^{(d)}|}{|A_i^{(2)}|}^{-1}\cdots\binom{|A_i^{(d-1)}|+|A_i^{(d)}|}{|A_i^{(d-1)}|}^{-1}
=\binom{|A_i^{(1)}|+\cdots+|A_i^{(d)}|}{|A_i^{(1)}|,\ldots,|A_i^{(d)}|}^{-1},
\end{equation*}
and so
\begin{equation*}
\mathbb{P}(E_i)=\left(\binom{|A_i^{(1)}|+\cdots+|A_i^{(d)}|+d-1}{d-1}\binom{|A_i^{(1)}|+\cdots+|A_i^{(d)}|}{|A_i^{(1)}|,\ldots,|A_i^{(d)}|}\right)^{-1}.
\end{equation*}
Now it is enough to show~$E_i$ and~$E_j$ cannot happen at the same time if~$i\neq j$. Actually, if~$\sigma\in E_i\cap E_j$, for every~$p<q$,
there is a~$b\in\{n+1,\ldots,n+d-1\}$ such that~$\sigma(a_i)<\sigma(b)<\sigma(a_j),\forall a_i\in A_i^{(p)}, a_j\in A_j^{(q)}$, so~$A_i^{(p)}\cap A_j^{(q)}=\emptyset$.
For the same reason, we have~$A_i^{(q)}\cap A_j^{(p)}=\emptyset$. Then we get a contradiction, no matter~$i<j$ or~$i>j$. Therefore, we have
\begin{equation*}
1\geqslant\mathbb{P}\left(\bigcup_{i=1}^m E_i\right)=\sum_{i=1}^m\mathbb{P}(E_i)
=\sum_{i=1}^m\left(\binom{|A_i^{(1)}|+\cdots+|A_i^{(d)}|+d-1}{d-1}\binom{|A_i^{(1)}|+\cdots+|A_i^{(d)}|}{|A_i^{(1)}|,\ldots,|A_i^{(d)}|}\right)^{-1}.
\end{equation*}
\end{proof}

Now we are going to prove Theorem~\ref{Th:d-nonuniform} with similar probabilistic argument.

\begin{proof}[Proof of Theorem~\ref{Th:d-nonuniform}]
We prove the theorem by induction on~$d$. For~$d=3$, pick a random permutation~$\sigma\in S_{n+1}$ uniformly.~$n+1$ will play the role of delimiter.
Let~$E_i$ and~$F_i$ be events that elements are placed in the order~$\sigma(A_i^{(1)}),\sigma(n+1),\sigma(A_i^{(2)}),\sigma(A_i^{(3)})$
and~$\sigma(A_i^{(1)}),\sigma(A_i^{(2)}),\sigma(n+1),\sigma(A_i^{(3)})$ correspondingly. In other words, let
\begin{eqnarray*}
  E_i &:=& \{\sigma\in S_{n+1}\mid \sigma(a_1)<\sigma(n+1)<\sigma(a_2)<\sigma(a_3),\forall a_k\in A_i^{(k)}\},  \\
  F_i &:=& \{\sigma\in S_{n+1}\mid \sigma(a_1)<\sigma(a_2)<\sigma(n+1)<\sigma(a_3),\forall a_k\in A_i^{(k)}\}.
\end{eqnarray*}
Then similar with the discussion in the former proof, we have
\begin{equation*}
\mathbb{P}(E_i)=\mathbb{P}(F_i)=\left((|A_i^{(1)}|+|A_i^{(2)}|+|A_i^{(3)}|+1)\binom{|A_i^{(1)}|+|A_i^{(2)}|+|A_i^{(3)}|}{|A_i^{(1)}|,|A_i^{(2)}|,|A_i^{(3)}|}\right)^{-1}.
\end{equation*}

For~$i\neq j$, we claim all of~$E_i\cap E_j$,~$F_i\cap F_j$,~$E_i\cap F_j$, and~$E_j\cap F_i$ are empty. Suppose~$\sigma\in E_i\cap E_j$.
Then~$A_i^{(1)}\cap(A_j^{(2)}\cup A_j^{(3)})=A_j^{(1)}\cap(A_i^{(2)}\cup A_i^{(3)})=\emptyset$.
By the definition of~\bol~system, neither~$A_i^{(2)}\cap A_j^{(3)}$ nor~$A_i^{(3)}\cap A_j^{(2)}$ is empty.
However,~$t\in A_i^{(2)}\cap A_j^{(3)}$ implies
\begin{equation*}
\sigma(a_i)>\sigma(t)>\sigma(a_j),\forall a_i\in A_i^{(3)},a_j\in A_j^{(2)},
\end{equation*}
and~$A_i^{(3)}\cap A_j^{(2)}=\emptyset$. Hence we get a contradiction, and so~$E_i\cap E_j=\emptyset$. Similarly,~$F_i\cap F_j=\emptyset$.
Suppose~$\sigma\in E_i\cap F_j$. Then elements in~$\sigma(A_i^{(1)}),\sigma(A_j^{(1)}),\sigma(A_j^{(2)})$ appear before~$\sigma(n+1)$,
while that in~$\sigma(A_i^{(2)}),\sigma(A_i^{(3)}),\sigma(A_j^{(3)})$ after it,
so we have~$A_i^{(2)}\cap A_j^{(1)}=A_i^{(3)}\cap A_j^{(1)}=A_i^{(3)}\cap A_j^{(2)}=\emptyset$, which is a contradiction.
Hence~$E_i\cap F_j=\emptyset$, and similarly~$E_j\cap F_i=\emptyset$.

If~$A_i^{(2)}\neq\emptyset$, then~$E_i\cap F_i=\emptyset$. Then we have
\begin{equation*}
1\geqslant\mathbb{P}\left(\bigcup_{A_i^{(2)}\neq\emptyset} (E_i\cup F_i)\right)
=2\sum_{A_i^{(2)}\neq\emptyset}\left((|A_i^{(1)}|+|A_i^{(2)}|+|A_i^{(3)}|+1)\binom{|A_i^{(1)}|+|A_i^{(2)}|+|A_i^{(3)}|}{|A_i^{(1)}|,|A_i^{(2)}|,|A_i^{(3)}|}\right)^{-1},
\end{equation*}
and so
\begin{equation*}
\sum_{A_i^{(2)}\neq\emptyset}\binom{|A_i^{(1)}|+|A_i^{(2)}|+|A_i^{(3)}|}{|A_i^{(1)}|,|A_i^{(2)}|,|A_i^{(3)}|}^{-1}
\leqslant\frac{n+1}{2}.
\end{equation*}

For~$A_i^{(2)}=\emptyset$, the collection~$\{(A_i^{(1)},A_i^{(3)})\mid A_i^{(2)}=\emptyset\}$ forms a~\bol~system (of~$d=2$).
Then by Theorem~\ref{Th:nonuniform}, we have
\begin{equation*}
\sum_{A_i^{(2)}=\emptyset}\binom{|A_i^{(1)}|+|A_i^{(2)}|+|A_i^{(3)}|}{|A_i^{(1)}|,|A_i^{(2)}|,|A_i^{(3)}|}^{-1}
=\sum_{A_i^{(2)}=\emptyset}\binom{|A_i^{(1)}|+|A_i^{(3)}|}{|A_i^{(1)}|}^{-1}\leqslant 1.
\end{equation*}

Combing these two cases, we have
\begin{equation*}
\sum_{i=1}^m\binom{|A_i^{(1)}|+|A_i^{(2)}|+|A_i^{(3)}|}{|A_i^{(1)}|,|A_i^{(2)}|,|A_i^{(3)}|}^{-1}\leqslant\frac{n+3}{2}.
\end{equation*}

Now we are going to prove the theorem for arbitrary~$d$ while assuming it is true for the case~$d-1$.
Pick a random permutation~$\sigma\in S_{n+d-2}$ uniformly. Once again, we will use the elements in~$\{n+1,\ldots,n+d-2\}$ as delimiters.
For~$i\in[m]$ and~$k\in[d-1]$, let~$E_i^{(k)}$ be the event that~$\sigma(a_1)<\cdots<\sigma(a_d),\forall a_l\in A_i^{(l)}$,
and all pairs in~$\{(\sigma(A_i^{(l)}),\sigma(A_i^{(l+1)}))\mid l\in[d-1]\}$ except~$(\sigma(A_i^{(k)}),\sigma(A_i^{(k+1)}))$ are divided by a delimiter. Then
\begin{equation*}
\mathbb{P}(E_i^{(k)})=\left(\binom{|A_i^{(1)}|+\cdots+|A_i^{(d)}|+d-2}{d-2}\binom{|A_i^{(1)}|+\cdots+|A_i^{(d)}|}{|A_i^{(1)}|,\ldots,|A_i^{(d)}|}\right)^{-1}.
\end{equation*}

Similar with the discussion in the first part of this proof, we have~$E_i^{(k)}\cap E_j^{(l)}=\emptyset$ for any~$i\neq j$,
and~$E_i^{(k)}\cap E_i^{(l)}=\emptyset,\forall k\neq l$ if none of~$A_i^{(2)},\ldots,A_i^{(d-1)}$ is empty. Then we have
\begin{equation*}
\begin{split}
  1 & \geqslant\mathbb{P}\left(\bigcup_{i:A_i^{(l)}\neq\emptyset,\forall 2\leqslant l\leqslant d-1}\bigcup_{k=1}^{d-1} E_i^{(k)}\right)
      =\sum_{i:A_i^{(l)}\neq\emptyset,\forall 2\leqslant l\leqslant d-1}\sum_{k=1}^{d-1}\mathbb{P}(E_i^{(k)})  \\
    & =(d-1)\sum_{i:A_i^{(l)}\neq\emptyset,\forall 2\leqslant l\leqslant d-1}
        \left(\binom{|A_i^{(1)}|+\cdots+|A_i^{(d)}|+d-2}{d-2}\binom{|A_i^{(1)}|+\cdots+|A_i^{(d)}|}{|A_i^{(1)}|,\ldots,|A_i^{(d)}|}\right)^{-1},
\end{split}
\end{equation*}
and so
\begin{equation*}
\sum_{i:A_i^{(k)}\neq\emptyset,2\leqslant k\leqslant d-1}\binom{|A_i^{(1)}|+\cdots+|A_i^{(d)}|}{|A_i^{(1)}|,\ldots,|A_i^{(d)}|}^{-1}
\leqslant\frac{1}{d-1}\binom{n+d-2}{d-2}.
\end{equation*}
Note that for~$k\in\{2,\ldots,d-1\}$,
the family~$\{(A_i^{(1)},\ldots,A_i^{(k-1)},A_i^{(k+1)},\ldots,A_i^{(d)})\mid A_i^{(k)}=\emptyset\}$ is a~\bol~system of~$(d-1)$-tuples.
Then by the inductive hypothesis, we have
\begin{equation*}
\sum_{A_i^{(k)}=\emptyset}\binom{|A_i^{(1)}|+\cdots+|A_i^{(d)}|}{|A_i^{(1)}|,\ldots,|A_i^{(d)}|}^{-1}\leqslant\frac{1}{d-2}\binom{n+d-3}{d-3}+O(n^{d-4}).
\end{equation*}
Hence
\begin{equation*}
\begin{split}
  \sum_{i=1}^m\binom{|A_i^{(1)}|+\cdots+|A_i^{(d)}|}{|A_i^{(1)}|,\ldots,|A_i^{(d)}|}^{-1} & \leqslant\frac{1}{d-1}\binom{n+d-2}{d-2}+\binom{n+d-3}{d-3}+O(n^{d-4}) \\
    & =\frac{1}{d-1}\binom{n+d-2}{d-2}+O(n^{d-3}).
\end{split}
\end{equation*}
\end{proof}

\section{Proof of Theorem~\ref{Th:d-skew-uniform} and Theorem~\ref{Th:d-space}}

A handy tool to deal with (skew)~\bol~systems of spaces is exterior algebra.
Suppose that~$\alpha_1,\ldots,\alpha_k$ are~$n$-dimensional (raw) vectors in~$\mathbb{R}^n$, let
\begin{equation*}
A=\begin{pmatrix}
\alpha_1 \\
\vdots \\
\alpha_k
\end{pmatrix}
\end{equation*}
be a~$k\times n$ matrix. For~$I\in\binom{[n]}{k}$, denote~$A_I$ the~$k\times k$ submatrix of~$A$ consisting of the~$k$ columns labeled by~$I$.
Then the exterior product (or wedge product) of~$\alpha_1,\ldots,\alpha_k$ is an~$\binom{n}{k}$-dimensional vector
\begin{equation*}
\alpha_1\wedge\cdots\wedge\alpha_k:=(|A_{I}|\mid I\in\binom{[n]}{k}).
\end{equation*}
Note that if~$n=k$ then the exterior product is just the determinant of~$A$.

The above simplified version of the definition of exterior product comes from~\cite{Book}, in where more details and propositions can be found.
For the original abstract definition of exterior product and exterior algebra, one may refer to any textbooks of multilinear algebra.
Here we emphasize the following two lemmas, which are basic conclusions in multilinear algebra, but play a vital role in our proof.

\begin{yinli}
Let~$V=\mathbb{R}^n$, and~$V^k$ be the Cartesian product of~$k$ many of~$V$. Then
\begin{equation*}
f:V^k\rightarrow \mathbb{R}^{\binom{n}{k}},\quad (\alpha_1,\ldots,\alpha_k)\mapsto\alpha_1\wedge\cdots\wedge\alpha_k
\end{equation*}
is a~$k$-linear mapping.
\end{yinli}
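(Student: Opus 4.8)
The plan is to reduce the claimed $k$-linearity of $f$ to the classical multilinearity of the determinant in its rows. Recall that under the definition adopted in the excerpt, the $I$-th coordinate of $f(\alpha_1,\ldots,\alpha_k)$, for $I\in\binom{[n]}{k}$, is the minor $|A_I|$, i.e.\ the determinant of the $k\times k$ matrix whose rows are the restrictions of $\alpha_1,\ldots,\alpha_k$ to the columns indexed by $I$. Since the target space $\mathbb{R}^{\binom{n}{k}}$ carries the coordinatewise vector-space structure, it suffices to verify linearity of each such coordinate function in each argument separately; additivity and homogeneity of the full map then follow coordinate by coordinate.

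First I would fix an index $i\in[k]$ together with all the vectors $\alpha_1,\ldots,\alpha_k$ except $\alpha_i$, and fix a column set $I\in\binom{[n]}{k}$. Writing $(\beta)_I$ for the row vector obtained by restricting $\beta\in\mathbb{R}^n$ to the coordinates in $I$, the matrix $A_I$ has rows $(\alpha_1)_I,\ldots,(\alpha_k)_I$, and replacing $\alpha_i$ by $\alpha_i+\beta$ replaces its $i$-th row by $(\alpha_i)_I+(\beta)_I$, the other rows being unchanged. The standard fact that the determinant is additive in each fixed row then shows that $|A_I|$ evaluated at $\alpha_i+\beta$ equals the sum of the two minors at $\alpha_i$ and at $\beta$; likewise replacing $\alpha_i$ by $c\alpha_i$ scales the $i$-th row, hence the determinant, by $c$.

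Finally, since the two identities above hold for every $I\in\binom{[n]}{k}$ simultaneously, assembling the coordinates yields $f(\ldots,\alpha_i+\beta,\ldots)=f(\ldots,\alpha_i,\ldots)+f(\ldots,\beta,\ldots)$ and $f(\ldots,c\alpha_i,\ldots)=c\,f(\ldots,\alpha_i,\ldots)$ in $\mathbb{R}^{\binom{n}{k}}$. As $i\in[k]$ was arbitrary, $f$ is linear in each of its $k$ arguments, that is, $k$-linear.

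I would expect no genuine obstacle here: the entire content is the row-multilinearity of the determinant, a basic result we may quote. The only mild care needed is notational, namely keeping track of the restriction-to-$I$ operation and confirming that the coordinatewise structure on $\mathbb{R}^{\binom{n}{k}}$ legitimately reduces the vector-valued assertion to the scalar minors. If a fully basis-free argument were preferred, one could instead invoke the universal property of the exterior power, but matching the statement to the explicit minor definition used in the excerpt makes the determinant computation the cleanest route.
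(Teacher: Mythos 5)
Your argument is correct: with the coordinatewise definition $\alpha_1\wedge\cdots\wedge\alpha_k=(|A_I|\mid I\in\binom{[n]}{k})$, reducing $k$-linearity to the row-multilinearity of each $k\times k$ minor is exactly the right (and standard) proof. Note that the paper itself states this lemma without proof, presenting it as a basic fact of multilinear algebra quoted from the literature, so there is no competing argument to compare against; your write-up simply supplies the routine verification the author chose to omit.
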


\begin{yinli}
Suppose that~$\alpha_1,\ldots,\alpha_k$ are~$k$ many of~$n$-dimensional vectors.
Then they are linearly independent if and only if~$\alpha_1\wedge\cdots\wedge\alpha_k\neq 0$.
\end{yinli}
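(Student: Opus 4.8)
The plan is to argue the two implications separately, working directly from the coordinate description $\alpha_1\wedge\cdots\wedge\alpha_k=(|A_I|\mid I\in\binom{[n]}{k})$, where $A$ is the $k\times n$ matrix with rows $\alpha_1,\ldots,\alpha_k$. With this description, the wedge product is the zero vector precisely when every maximal minor $|A_I|$ vanishes, so the assertion reduces to the classical fact that a $k\times n$ matrix has full row rank if and only if some $k\times k$ minor is nonzero.

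For the direction that linear dependence forces $\alpha_1\wedge\cdots\wedge\alpha_k=0$, I would start from a nontrivial relation $\sum_{l=1}^k c_l\alpha_l=0$. For each $I\in\binom{[n]}{k}$, deleting all columns outside $I$ preserves this relation among the rows, so the rows of the square submatrix $A_I$ are linearly dependent and $|A_I|=0$. Since every coordinate then vanishes, the wedge product is $0$. Equivalently, one may invoke the $k$-linearity of $f$ from the preceding lemma together with the alternating property of determinants: substituting $\alpha_j=\sum_{l\neq j}c_l\alpha_l$ for the dependent vector and expanding by multilinearity produces only terms in which some argument is repeated, each of which is $0$.

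For the converse, that independence forces a nonzero wedge, I would note that if $\alpha_1,\ldots,\alpha_k$ are independent then $A$ has row rank $k$, hence column rank $k$ by the equality of row rank and column rank, so some $k$ columns of $A$ are linearly independent. Letting $I$ index those columns, $A_I$ is a $k\times k$ matrix of rank $k$, hence invertible with $|A_I|\neq 0$; as $|A_I|$ is one coordinate of $\alpha_1\wedge\cdots\wedge\alpha_k$, the wedge product is nonzero.

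The argument is routine multilinear algebra, so there is no serious obstacle; the only point requiring a little care is the converse, where the existence of a nonvanishing maximal minor is not immediate from full row rank and rests on the identity of row and column rank (or, alternatively, on Gaussian elimination exhibiting a $k\times k$ pivot submatrix). I would cite this standard fact rather than reprove it, and refer the reader to~\cite{Book} for the properties of the exterior product used above.
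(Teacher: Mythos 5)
Your proof is correct, and in fact the paper offers no proof of this lemma at all: it is stated as a ``basic conclusion in multilinear algebra'' with the reader referred to~\cite{Book}, so your argument simply supplies the standard justification. Both directions are sound --- dependence killing every maximal minor $|A_I|$, and independence yielding a nonzero $k\times k$ minor via the equality of row and column rank --- and they match the paper's coordinate definition of $\alpha_1\wedge\cdots\wedge\alpha_k$ as the vector of maximal minors, so nothing further is needed.
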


Suppose~$\alpha=\alpha_1\wedge\cdots\wedge\alpha_s$ and~$\beta=\beta_1\wedge\cdots\wedge\beta_t$, denote
\begin{equation*}
\alpha\wedge\beta:=\alpha_1\wedge\cdots\wedge\alpha_s\wedge\beta_1\wedge\cdots\wedge\beta_t,
\end{equation*}
which is a bilinear mapping.

Fix~$V=\mathbb{R}^n$, and a~$k\leqslant n$. Denote~$\bigwedge^k V$, the collection of the exterior products of~$k$ many of vectors in~$V$.
Then~$\bigwedge^k V$ is a~$\binom{n}{k}$-dimensional vector space.
Let~$W$ be a~$k$-dimensional subspace of~$V$, and~$\alpha_1,\ldots,\alpha_k$ be a basis of~$W$. Denote
\begin{equation*}
\bigwedge W:=\alpha_1\wedge\cdots\wedge\alpha_k\in\bigwedge^k V.
\end{equation*}
Note that this notation is not well defined, for~$\bigwedge W$ depends on the choice of the basis.
However, the only difference between them is a nonzero factor.
So we can use it harmlessly as long as we only care about if the result is zero or not.

To prove Theorem~\ref{Th:d-skew-uniform} and Theorem~\ref{Th:d-space}, we also need the following concept and lemma for general position argument.
Suppose~$V$ and~$W$ are~$n$-dimensional and~$k$-dimensional vector spaces over field~$F$, and~$U_1,\cdots,U_m$ are subspaces of~$V$, where~$\dim(U_i)=r_i$.
We say linear mapping~$\phi:V\rightarrow W$  is in general position with~$U_i$'s, if
\begin{equation*}
\dim(\phi(U_i))=\min\{r_i,k\},\quad i=1,\ldots,m.
\end{equation*}
Note that for every subspace~$U_i$,~$\min\{r_i,k\}$ is the maximum possible dimension of~$\phi(U_i)$.
The following lemma shows that for fixed~$V$,~$W$, and~$U_i$'s, a linear mapping~$\phi:V\rightarrow W$ that in general position with~$U_i$'s always exists,
as long as~$|F|$ is large enough.

\begin{yinli}[\cite{Book}]\label{GP3}
Suppose~$V$ and~$W$ are~$n$-dimensional and~$k$-dimensional vector spaces over field~$F$,~$U_1,\ldots,U_m$ are subspaces of~$V$, and~$\dim(U_i)=r_i$.
If~$|F|>(n-k)(m+1)$ then there exists a linear mapping~$\phi:V\rightarrow W$ that is in general position with~$U_i$'s.
\end{yinli}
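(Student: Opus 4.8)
The plan is to prove Lemma~\ref{GP3} by reformulating ``general position'' as a condition on the kernel of $\phi$ and then building a suitable kernel greedily, one vector at a time. Write $c=n-k$. For a linear map $\phi\colon V\to W$ the rank--nullity identity gives $\dim(\phi(U_i))=r_i-\dim(U_i\cap\ker\phi)$, so $\phi$ is in general position with the $U_i$'s precisely when
\begin{equation*}
\dim(U_i\cap\ker\phi)=r_i-\min\{r_i,k\}=\max\{r_i-k,0\},\qquad i=1,\ldots,m,
\end{equation*}
which is the smallest value compatible with $\dim(\ker\phi)=c$. Thus it suffices to produce a $c$-dimensional subspace $K\leqslant V$ meeting every $U_i$ in this minimal dimension; taking $\phi$ to be the quotient $V\to V/K$ followed by any isomorphism $V/K\cong W$ then finishes the proof.

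To build $K$, I would choose independent vectors $w_1,\ldots,w_c$ one at a time and track the partial kernels $K_t=\langle w_1,\ldots,w_t\rangle$, maintaining the invariant $\dim(U_i\cap K_t)=\max\{r_i-(n-t),0\}$ for all $i$ (the minimal value for a $t$-dimensional subspace). The key computation, via the identity $\dim(U_i\cap K_{t+1})=\dim(U_i\cap K_t)+[\,w_{t+1}\in U_i+K_t\,]$, is that under the invariant one has $U_i+K_t=V$ whenever $r_i\geqslant n-t$; so the intersection with these ``large'' subspaces grows by exactly one automatically, as required, while only the ``small'' subspaces with $r_i\leqslant n-t-1$, for which $U_i+K_t$ is proper, must be actively avoided (together with $K_t$ itself, to keep the $w_j$ independent).

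Hence at each of the $c$ steps I need a vector $w_{t+1}$ lying outside a union of at most $m+1$ proper subspaces of $V$. Since each proper subspace contributes at most $|F|^{n-1}$ points, such a union cannot exhaust $F^n=V$ once $|F|>m+1$, so a valid $w_{t+1}$ exists and the invariant is preserved. The hypothesis $|F|>(n-k)(m+1)\geqslant m+1$ (for $c\geqslant1$; the case $c=0$ is trivial, with $\phi$ an isomorphism) is comfortably sufficient, so the construction runs to completion and yields the desired $\phi$.

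The main obstacle is the bookkeeping in the inductive step: one must verify that the single invariant on $\dim(U_i\cap K_t)$ is both preserved by the avoidance rule and strong enough to force the automatic growth against the large subspaces, so that no hidden constraint is missed. A thematically alternative route, closer to the exterior-algebra setup of the preceding lemmas, is to parametrize $\ker\phi$ as the graph of a $k\times(n-k)$ matrix $L$ and encode each minimal-intersection condition as the nonvanishing of a wedge-product determinant $P_i(L)$ of degree at most $n-k$ in the entries of $L$; a Schwartz--Zippel argument applied to $\prod_{i=1}^m P_i$ then reproduces the bound in the exact form $|F|>(n-k)(m+1)$. There the obstacle shifts to checking that each $P_i$ is not identically zero, i.e.\ that general position is generically attainable.
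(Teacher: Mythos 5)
The paper does not actually prove Lemma~\ref{GP3}: it is imported verbatim from Babai--Frankl~\cite{Book}, so there is no internal proof to compare you against; your proposal must be judged on its own, and it holds up. The rank--nullity reformulation is exact ($\dim\phi(U_i)=r_i-\dim(U_i\cap\ker\phi)$, so general position is equivalent to $\dim(U_i\cap\ker\phi)=\max\{r_i-k,0\}$, attainable by any $K$ of dimension $c=n-k$ with minimal intersections, followed by $V\to V/K\cong W$). The key identity $\dim(U_i\cap K_{t+1})=\dim(U_i\cap K_t)+[\,w_{t+1}\in U_i+K_t\,]$ is valid precisely because you also keep $w_{t+1}\notin K_t$, and your dichotomy is complete: under the invariant, $\dim(U_i+K_t)=r_i+t-\bigl(r_i-(n-t)\bigr)=n$ whenever $r_i\geqslant n-t$, so growth against the large subspaces is forced, while for $r_i\leqslant n-t-1$ one has $\dim(U_i+K_t)=r_i+t\leqslant n-1$, a proper subspace to be avoided together with $K_t$ --- at most $m+1$ proper subspaces per step, which a field with $|F|>m+1$ (or even $|F|\geqslant m+1$, by the standard fact that a union of $\ell$ proper subspaces can cover $V$ only if $\ell\geqslant|F|+1$) cannot let cover $V$. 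Notably, your greedy construction therefore proves the lemma under the \emph{weaker} hypothesis $|F|>m+1$, which is implied by the stated $|F|>(n-k)(m+1)$ in the nontrivial case $k<n$; the factor $n-k$ in the book's formulation is the signature of a one-shot polynomial argument (essentially your sketched alternative: kernels parametrized as graphs of $k\times(n-k)$ matrices, each condition a nonvanishing determinant of degree at most $n-k$, then a degree count over $m+1$ such polynomials), whereas your step-by-step avoidance never accumulates degrees. The only cosmetic loose ends are the degenerate cases $k\geqslant n$ (any injection works, as you note for $k=n$) and making explicit that $w_{t+1}\notin K_t$ underlies the displayed identity, which your avoidance rule already guarantees.
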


Now we are ready to prove Theorem~\ref{Th:d-space}

\begin{proof}[Proof of Theorem~\ref{Th:d-space}]
For every~$k\in[d]$, let~$V_k=\mathbb{R}^{a_1+\cdots+a_k}$. By general position argument (Lemma~\ref{GP3}),
there exists a linear mapping~$\phi_k:V\rightarrow V_k$ that maintains the dimension of~$A_i^{(1)}+\cdots+A_i^{(k)}$ for~$i\in[m]$,
and the dimension of~$A_i^{(p)}+A_j^{(q)}$ for~$i,j\in[m]$ and~$p,q\in[k]$.
The first condition leads to the fact that~$V_k=\phi_k(A_i^{(1)})\bigoplus\cdots\bigoplus\phi_k(A_i^{(k)})$, and the second condition guarantees that
\begin{equation}\label{eq}
\dim(\phi_k(A_i^{(p)})\cap\phi_k(A_j^{(q)}))=\dim(A_i^{(p)}\cap A_j^{(q)}),\quad\forall i,j\in[m],p,q\in[k].
\end{equation}
Actually, we have
\begin{equation*}
\begin{split}
  \dim(\phi_k(A_i^{(p)})\cap\phi_k(A_j^{(q)})) & =\dim(\phi_k(A_i^{(p)}))+\dim(\phi_k(A_j^{(q)}))-\dim(\phi_k(A_i^{(p)})+\phi_k(A_j^{(q)})) \\
    & =\dim(A_i^{(p)})+\dim(A_j^{(q)})-\dim(\phi_k(A_i^{(p)}+A_j^{(q)})) \\
    & =\dim(A_i^{(p)})+\dim(A_j^{(q)})-\dim(A_i^{(p)}+A_j^{(q)}) \\
    & =\dim(A_i^{(p)}\cap A_j^{(q)}).
\end{split}
\end{equation*}

For any~$i\in[m],k\in[d],p\in[k]$,~$\phi_k(A_i^{(p)})$ is a subspace of~$V_k$. Let~$\alpha(i,k,p)=\bigwedge\phi_k(A_i^{(p)})\in\bigwedge^{a_p} V_k$.
For~$i\in[m]$ and~$k\in\{2,\ldots,d\}$, let
\begin{equation*}
f_{i,k}:\bigwedge^{a_k}V_k\rightarrow\mathbb{R},\quad\beta_k\mapsto\left(\alpha(i,k,1)\wedge\alpha(i,k,2)\wedge\cdots\wedge\alpha(i,k,k-1)\right)\wedge\beta_k
\end{equation*}
be a linear mapping. Finally, for every~$i\in[m]$, let
\begin{equation*}
f_i(\beta_2,\ldots,\beta_d):=f_{i,2}(\beta_2)\cdots f_{i,d}(\beta_d),
\end{equation*}
and
\begin{equation*}
\xi_i:=(\alpha(i,2,2),\alpha(i,3,3),\ldots,\alpha(i,d,d))\in\bigwedge^{a_2}V_2\times\cdots\times\bigwedge^{a_d}V_d.
\end{equation*}

Now we claim that~$f_i(\xi_i)\neq 0$ and~$f_i(\xi_j)=0,\forall i<j$.
Actually, for every~$k\in\{2,\ldots,d\}$, we have~$V_k=\phi_k(A_i^{(1)})\bigoplus\cdots\bigoplus\phi_k(A_i^{(k)})$.
Then we can form a basis of~$V_k$ by putting together the bases of~$\phi_k(A_i^{(p)})$ for every~$p\in[k]$. Hence
\begin{equation*}
f_{i,k}(\alpha(i,k,k))=\alpha(i,k,1)\wedge\alpha(i,k,2)\wedge\cdots\wedge\alpha(i,k,k-1)\wedge\alpha(i,k,k)
\end{equation*}
is nonzero, and so~$f_i(\xi_i)\neq 0$. On the other hand, for every~$i<j$, there exist~$p<q$ such that
\begin{equation*}
\dim(\phi_q(A_i^{(p)})\cap\phi_q(A_j^{(q)}))=\dim(A_i^{(p)}\cap A_j^{(q)})>0.
\end{equation*}
Then we have~$\alpha(i,q,p)\wedge\alpha(j,q,q)=0$, so
\begin{equation*}
f_{i,q}(\alpha(i,q,q))=(\alpha(i,q,1)\wedge\alpha(i,q,2)\wedge\cdots\wedge\alpha(i,q,q-1))\wedge\alpha(j,q,q)=0,
\end{equation*}
and~$f_i(\xi_j)=0$.

This is enough to show that~$f_1,\ldots,f_m$ are linearly independent. Actually, suppose that~$c_1f_1+\cdots+c_mf_m\equiv 0$ and~$c_1,\ldots,c_m$ are not all zeros.
Let~$k$ be the maximum index such that~$c_m\neq 0$. Then~$f_m\equiv -\frac{c_1}{c_m}f_1-\cdots-\frac{c_{m-1}}{c_m}f_{m-1}$.
But~$f_m(\xi_m)\neq 0$ and~$f_1(\xi_m)=\cdots=f_{m-1}(\xi_m)=0$ leads to a contradiction.

Note that for fixed~$i\in[m]$, and each~$k\in\{2,\ldots,d\}$,~$f_{i,k}$ is a linear mapping from a~$\binom{a_1+\cdots+a_k}{a_k}$ dimensional space to~$\mathbb{R}$,
and all of this kind of mapping forms a~$\binom{a_1+\cdots+a_k}{a_k}$ dimensional space.
In addition,~$f_i$ belongs to the tensor product of these spaces, so we have
\begin{equation*}
m\leqslant\binom{a_1+a_2}{a_2}\binom{a_1+a_2+a_3}{a_3}\cdots\binom{a_1+\cdots+a_d}{a_d}=\binom{a_1+\cdots+a_d}{a_1,\ldots,a_d}.
\end{equation*}
\end{proof}

Finally, we show how can we obtain Theorem~\ref{Th:d-skew-uniform} from Theorem~\ref{Th:d-space}.

\begin{proof}[Proof of Theorem~\ref{Th:d-skew-uniform}]
Without lose of generality, we may assume~$A_i^{k}\subseteq[n],\forall i\in[m],k\in[d]$ for some~$n$.
Let~$\epsilon_1,\ldots,\epsilon_n$ be a basis of~$\mathbb{R}^n$, and for every set~$A\subseteq[n]$,
let~$s(A)=\spa(\{\epsilon_a\mid a\in A\})$ be a subspace of~$\mathbb{R}^n$.
Consider the family
\begin{equation*}
\mathcal{F}':=\{(s(A_i^{(1)}),\ldots,s(A_i^{(d)}))\mid i\in[m]\},
\end{equation*}
we have~$\dim(s(A_i^{(k)}))=|A_i^{(k)}|=a_k,\forall i\in[m],k\in[d]$, and~$\dim(s(A_i^{(p)})\cap s(A_j^{(q)}))=|A_i^{(p)}\cap A_j^{(q)}|,\forall i,j\in[m],p,q\in[d]$.
Then the fact that~$\mathcal{F}$ is a skew~\bol~system (of~$d$-tuples of sets) guarantees that~$\mathcal{F}'$ is a skew~\bol~system (of~$d$-tuples of spaces). Hence
\begin{equation*}
|\mathcal{F}|=|\mathcal{F}'|\leqslant\binom{a_1+\cdots+a_d}{a_1,\ldots,a_d}.
\end{equation*}
\end{proof}

\begin{center}
\textbf{Remark}
\end{center}

It was several months ago that the study in this paper began, and the counter example of Conjecture~\ref{Conj} (Example~\ref{Ex:counter}) was found.
The first version of this paper was uploaded to arXiv on 2024-11-26.
In the study of Tian and Wu~\cite{Tian}, they proved the invalidness of Conjecture~\ref{Conj} using Young's lattice.
They didn't put their manuscript to arXiv so we didn't notice their work, until Prof. Katona forwarded us their slide used in a conference.
With the help of Young's lattice, they went further on the construction, and they guessed their construction is the maximum case,
but they were not able to prove an upper bound in the general case.
Theorem~\ref{Th:d-nonuniform} in this paper gave an upper bound, and partly answered their question.
Although there is still a gap between the upper bound in Theorem~\ref{Th:d-nonuniform} and the construction in~\cite{Tian},
the combination of these two research shows that their construction (even if not extremum) and our upper bound (even if not tight) are not so bad.
In addition, as I know, both of us discovered Theorem~\ref{Th:d-skew-nonuniform} independently.

\begin{center}
\textbf{Acknowledgments}
\end{center}

We would like to show our appreciate to Prof. Gyula O. H. Katona for let us know the study of Tian and Wu.

\bibliography{bib}

\end{document}